\date{}
\title{Visibility to infinity in the hyperbolic plane,\\
despite obstacles}
\author{Itai Benjamini\thanks{Departments of Mathematics, The Weizmann Institute, Rehovot, Israel 76100. E-mail: {\tt itai.benjamini@weizmann.ac.il}} \and Johan Jonasson\thanks{Department of Mathematical Sciences,
    Division of Mathematics, Chalmers University of
    Technology, S-41296 G\"oteborg, Sweden. E-mail: {\tt
      jonasson@math.chalmers.se}. Research partially supported by the Swedish
    Natural Science Research Council.} \and Oded Schramm\thanks{Microsoft Research, One Microsoft Way, Redmond WA 98052, USA. }
\and Johan Tykesson\thanks{Department of Mathematical Sciences,
    Division of Mathematical Statistics, Chalmers University of
    Technology, S-41296 G\"oteborg, Sweden. E-mail: {\tt
      johant@math.chalmers.se}. Research supported by the Swedish
    Natural Science Research Council.}}
\newif\ifhyper\IfFileExists{hyperref.sty}{\hypertrue}{\hyperfalse}
\ifhyper\usepackage[hypertex]{hyperref}\fi
\newif\ifdraft
\numberwithin{equation}{section}
\numberwithin{figure}{section}
\newtheorem{theorem}{Theorem}
\numberwithin{theorem}{section}
\newtheorem{lemma}[theorem]{Lemma}
\newtheorem{proposition}[theorem]{Proposition}
\newtheorem{conjecture}[theorem]{Conjecture}
\newtheorem{problem}[theorem]{Problem}
\theoremstyle{definition}\newtheorem{remark}[theorem]{Remark}
\newtheorem{question}[theorem]{Question}
\newtheorem{definition}[theorem]{Definition}
\newcommand{\R}{\mathbb{R}}
\newcommand{\Z}{\mathbb{Z}}
\newcommand{\N}{\mathbb{N}}
\def\H{\mathbb{H}}
\def\length{\mathrm{length}}
\def\ev#1{\mathcal{#1}}
\def \eps {\epsilon}
\def \P {{\bf P}}
\def\md{\mid}
\def\Bb#1#2{{\def\md{\bigm| }#1\bigl[#2\bigr]}}
\def\Bs#1#2{{\def\md{\mid}#1[#2]}}
\def\Pb{\Bb\P}
\def\Eb{\Bb\E}
\def\Es{\Bs\E}
\def \p {{\partial}}
\def \E {{\bf E}}
\def\proofof#1{{ \medbreak \noindent {\bf Proof of #1.} }}
\def \_reg {\rightarrow_{\bf reg}}
\def\maxdeg/{\Delta}
\def\area{\mathop{\mathrm{area}}}
\def\length{\mathop{\mathrm{length}}}
\def\gv{\mathrm{gv}}
\def\gc{\mathrm{gc}}
\def\OC{\mathcal B}
\def\VC{\mathcal W}
\def\gZ{\mathcal Z}
\def\vc{\lambda_{\gv}}
\def\barvc{\bar{\lambda}_{\gv}}
\def\oc{\lambda_{\gc}}
\def\baroc{\bar{\lambda}_{\gc}}
\begin{document}
\maketitle


\begin{abstract}
  Suppose that $\gZ$ is a random closed subset of the hyperbolic plane
  $\H^2$, whose law is invariant under isometries of $\H^2$.  We prove
  that if the probability that $\gZ$ contains a fixed ball of radius
  $1$ is larger than some universal constant $p_0<1$, then there is
  positive probability that $\gZ$ contains (bi-infinite) lines.

  We then consider a family of random sets in $\H^2$ that satisfy
  some additional natural assumptions. An example of such a set is the
  covered region in the Poisson Boolean model. Let $f(r)$ be the
  probability that a line segment of length $r$ is contained in such a
  set $\gZ$. We show that if $f(r)$ decays fast enough, then there are
  a.s.\ no lines in $\gZ$. We also show that if the decay of $f(r)$ is
  not too fast, then there are a.s.\ lines in $\gZ$. In the case of
  the Poisson Boolean model with balls of fixed radius $R$ we
  characterize the critical intensity for the a.s.\ existence of lines
  in the covered region by an integral equation.

  We also determine when there are lines in the complement of a Poisson process on the Grassmannian of lines in $\H^2$.
\end{abstract}

\noindent{\bf Keywords and phrases:\/} continuum percolation, phase
transitions, hyperbolic geometry

\noindent{\bf Subject classification:\/} 82B21, 82B43


\section{Introduction and main results}

In this paper, we are interested in the existence of hyperbolic
half-lines and lines (that is, infinite geodesic rays and bi-infinite
geodesics respectively) contained in unbounded connected components of
some continuum percolation models in the hyperbolic plane. Our first
result is quite general:

\begin{theorem}\label{t.gen}
Let $\gZ$ be a random closed subset of $\H^2$, whose law is invariant under
isometries of $\H^2$, and let $B$ denote some fixed ball of radius $1$
in $\H^2$. There is a universal constant $p_o<1$ such that
if $\Pb{B\subset \gZ}>p_o$, then with positive probability $\gZ$ contains
hyperbolic lines.
\end{theorem}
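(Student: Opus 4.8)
The plan is to reduce the existence of a bi-infinite geodesic inside $\gZ$ to the survival of a branching (tree-indexed) percolation built from the ball-events, and to extract the universal constant $p_o$ from the exponential growth of $\H$. Fix a point $o$ and cover a candidate geodesic through $o$ by unit balls centered on it and spaced at distance less than $2$, so that if every such ball lies in $\gZ$ then the whole geodesic lies in $\gZ$. Grouping the forward ball at distance $n$ with the antipodal backward ball at distance $n$ into a single \emph{site}, call a site \emph{open} if both of its balls are contained in $\gZ$; by isometry-invariance and a union bound its failure probability is at most $2\,(1-\Pb{B\subset \gZ})<2\,(1-p_o)=:2\eps$. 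An infinite open path whose directions refine to a single ideal point $\xi$ then yields a genuine line $(\xi',\xi)\subset\gZ$ with antipodal endpoints $\xi',\xi$, since the union of the balls along the path contains the limiting geodesic by a standard quasigeodesic/covering argument.

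Next I would organize the candidate directions into a rooted tree $T$ of branching number $b\ge 2$. Because the circle of radius $n$ about $o$ has length of order $e^{n}$ while a unit ball occupies bounded width, the forward cone of a site at distance $n$ contains at least two disjoint forward sites at distance $n+1$; iterating realizes a binary (indeed richer) tree of unit geodesic segments, each edge carrying one site as above. A bi-infinite clear geodesic through $o$ corresponds exactly to an infinite open path from the root of $T$, so it suffices to show that this tree-indexed percolation survives with positive probability. In the idealized case of independent sites this is immediate: survival holds whenever $b\,(1-2\eps)>1$, and since $b\ge 2$ this asks only $\eps<1/4$, i.e.\ $p_o>3/4$, already exhibiting a universal threshold.

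The main obstacle is that the hypothesis supplies \emph{only} the one-site marginal together with isometry-invariance: there is no independence or mixing assumption, so the offspring events in $T$ are genuinely dependent and the Galton--Watson comparison is not available off the shelf. To control this I would run a second-moment method on $Y_n$, the number of open root-to-level-$n$ paths, aiming for $\E[Y_n]\to\infty$ together with $\E[Y_n^2]\le C\,\E[Y_n]^2$, whence $\Pb{Y_n>0}\ge \E[Y_n]^2/\E[Y_n^2]\ge 1/C$ and, by compactness, a clear line with probability at least $1/C$. Both moments reduce to joint clear-probabilities of the unit balls along one or two candidate segments, and the decisive point is a near-factorization of these probabilities over disjoint spatial regions. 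This is where the geometry of $\H$ must enter: two deep branches of $T$ separate exponentially fast, so that decomposing the second moment according to the depth $|u\wedge v|$ of the most recent common ancestor of two leaves $u,v$ should make the shared prefix contribute a convergent geometric series while the far-separated tails approximately factor.

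I expect the hardest step to be making this decorrelation rigorous from marginals alone, since a naive per-path union bound degrades like $1-2n\eps$ and is useless as $n\to\infty$; some genuine spatial independence must be manufactured. The route I would pursue is a renormalization: choose $p_o$ so close to $1$ that, after passing to coarse blocks, the block-level marginals are overwhelming and a finite union bound governs each block, while the exponential separation of branches keeps long-range dependence from accumulating across blocks, so that a self-improving recursion drives the renormalized density to $1$ and yields survival. Establishing that the hyperbolic separation indeed suppresses this cross-block dependence, using nothing beyond invariance and the single marginal, is the crux of the argument; once it is in place, the branching/second-moment comparison closes and gives positive probability of a line in $\gZ$.
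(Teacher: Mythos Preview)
Your diagnosis of the obstacle is exactly right: with only isometry-invariance and a one-ball marginal, no decorrelation is available, and this is the entire difficulty. But the route you propose to overcome it does not work. Exponential separation of geodesics in $\H^2$ says nothing about approximate independence of $\gZ$ on far-apart regions for an \emph{arbitrary} invariant law; there is no reason the joint probability $\Pb{\pi_1,\pi_2\text{ open}}$ should be close to a product once the branches have diverged, so the second-moment computation has no input, and the renormalization you sketch needs the very decorrelation it is meant to produce. The sentence ``establishing that the hyperbolic separation indeed suppresses this cross-block dependence, using nothing beyond invariance and the single marginal, is the crux'' is an honest admission that the key step is missing, and in fact it cannot be filled in along these lines. (A secondary issue: by pairing each forward ball with its antipode through $o$ you are asking for a line \emph{through a fixed point}, which is strictly harder than what the theorem claims and, as noted later in the paper, corresponds to a more restrictive threshold in the well-behaved case.)

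The paper's proof sidesteps the decorrelation problem by invoking the right black box. It embeds the $3$-regular tree $T$ in $\H^2$ as the orbit of a point under a discrete group $\Gamma$ of isometries (generated by three reflections), and shows geometrically that for any bi-infinite simple path $\beta$ in $T$ the hyperbolic line joining its two ideal endpoints stays within a fixed distance $R'$ of the vertices of $\beta$. One then declares $v\in V(T)$ open when $B(v,R')\subset\gZ$; this is a $\Gamma$-invariant site percolation on $T$ with marginal as close to $1$ as desired. The theorem of H\"aggstr\"om/BLPS on group-invariant percolation on nonamenable graphs --- proved via the mass-transport principle, not second moments --- says that high enough marginal forces infinite clusters with positive probability, and a further sprinkling argument forces bi-infinite simple paths. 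Each such path yields a line in $\gZ$ by the $R'$-neighborhood property. So the dependence issue is handled entirely inside the cited result; the contribution here is the reduction, and the geometric input is not ``separation $\Rightarrow$ decorrelation'' but rather ``tree path $\Rightarrow$ nearby hyperbolic line''.
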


The first result of this type was proven by Olle
H\"aggstr\"om~\cite{H} for regular trees of degree at least $3$.  That
paper shows that for automorphism invariant site percolation on such
trees, when the probability that a site is open is sufficiently close
to $1$, there are infinite open clusters with positive probability.
This was subsequently generalized to transitive nonamenable graphs
\cite{BLPS}.  The new observation here is that in $\H^2$, one can
actually find lines contained in unbounded components when the
marginal is sufficiently close to $1$. The proof of Theorem
\ref{t.gen} is not too difficult, and is based on a reduction to the
tree case.

We conjecture that Theorem~\ref{t.gen} may be strengthened by
taking $\gZ$ to be open and replacing
the assumption $\Pb{B\subset\gZ}>p_o$ with
$\Eb{\length(B\setminus\gZ)}<\delta$; see Conjecture~\ref{co.length}
and the discussion which follows.

We also obtain more refined results for random sets that satisfy a
number of additional conditions. One example of such a set is the following. Consider a Poisson point process with
intensity $\lambda$ on a manifold $M$. In the {\it Poisson Boolean
  model of continuum percolation} with parameters $\lambda$ and $R$,
balls of radius $R$ are centered around the points of the Poisson
process. One then studies the geometry of the connected components of
the union of balls, or the connected components of the complement. In
particular, one asks for which values of the parameters there are
unbounded connected components or a unique unbounded component.

In the setting of the Poisson Boolean model in the hyperbolic plane, Kahane~\cite{K1,K2}
showed that if $\lambda<1/(2\sinh R)$, then the set of rays from a
fixed point $o\in \H^2$ that are contained in the complement of the
balls is non-empty with positive probability, while if $\lambda\ge
1/(2\sinh R)$ this set is empty a.s.\ Lyons \cite{LY} generalized the
result of Kahane to $d$-dimensional complete simply-connected
manifolds of negative curvature, and in the case of constant negative
curvature also found the exact value of the critical intensity for the a.s.\ existence of lines.

In this note, we are interested in finding not only rays but lines in
the union of balls and/or its complement. We work mostly in the
hyperbolic plane, but raise questions for other spaces as well. Our
proofs cover the results of Kahane as well, but are also valid for a
larger class of random sets.  We remark that it is easy to see that in
$\R^n$, there can never be rays in the union of balls or in the
complement.

Other aspects of the Poisson Boolean model in $\H^2$ have previously
been studied in \cite{T}. For further studies of percolation in the
hyperbolic plane, the reader may consult the papers~\cite{BS1, L}.
In~\cite{CFKP}, an introduction to hyperbolic geometry is found, and
for an introduction to the theory of percolation on infinite graphs
see, for example,~\cite{BS2,LP,HJ}.  \medskip

Let $X=X_\lambda$ be the set of points in a Poisson process of
intensity $\lambda$ in $\H^2$.  Let
$$
\OC:=\bigcup_{x\in X} \overline{B(x,R)}
$$
denote the {\it occupied set}, where
$B(x,r)$ denotes the open ball of radius $r$ centered at $x$.
The closure of the complement
$$
\VC:=\overline{\H^2\setminus\OC}
$$
will be reffered to as the {\it vacant set}.

Let $\oc=\oc(R)$ denote the supremum of the set of $\lambda\ge 0$ such
that for the parameter values $(R,\lambda)$ a.s.\ $\OC$ does not
contain a hyperbolic line.  Let $\baroc$ denote the supremum of the
set of $\lambda\ge 0$ such that the probability that a fixed point
$x\in \H^2$ belongs to a half-line contained in $\OC$ is $0$.
Similarly let $\vc=\vc(R)$ denote the infimum of the set of
$\lambda\ge 0$ such that for the parameter values $(R,\lambda)$ a.s.\
$\VC$ does not contain a hyperbolic line. Finally, let $\barvc$ denote
the infimum of the set of $\lambda\ge 0$ such that the probability
that a fixed point $x\in \H^2$ belongs to a half-line contained in
$\VC$ is $0$. Later, we shall see that $\vc=\barvc$ and $\oc=\baroc$.
Clearly, if $\lambda>\vc$, there are a.s.\ no hyperbolic lines in
$\VC$ and if $\lambda<\oc$ there are a.s.\ no hyperbolic lines in
$\OC$.  Let $f(r)=f_{R,\lambda}(r)$ denote the probability that a
fixed line segment of length $r$ in $\H^2$ is contained in $\VC$.

\begin{theorem}\label{mainthm}
For every $R>0$, we have $0<\oc(R)=\baroc(R)<\infty$,
and the following statements hold at $\oc(R)$.
\begin{enumerate}
\item A.s.\ there are no hyperbolic lines within $\OC$.
\item Moreover, $\OC$ a.s.\ does not contain any hyperbolic ray (half-line).
\item There is a constant $c=c_R>0$, depending only on $R$, such that
\begin{equation}
\label{e.expo}
c\,e^{-r}\le f(r)\le e^{-r}, \qquad \forall r>0\,.
\end{equation}
\end{enumerate}

Furthermore, the analogous statements hold with $\VC$ in place of
$\OC$ (with possibly a different critical intensity).
\end{theorem}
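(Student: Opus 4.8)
The engine for the whole statement is a single geometric computation together with a capacity (second--moment) criterion on the boundary circle $\partial\H^2$, and I would set these up first, treating $\VC$ and $\OC$ in parallel; since $f$ is the \emph{vacant} segment probability, I describe the vacant case in detail and note that $\OC$ is handled by the dual coverage functional, with its own (possibly different) critical intensity. The plan is to begin by recording the exact form of $f$. A segment $\sigma$ of length $r$ lies in $\VC$ precisely when the Poisson process omits the closed $R$--neighbourhood (tube) of $\sigma$; by the tube--area formula in $\H^2$ this neighbourhood has area $2r\sinh R+A_0(R)$, where $A_0(R)$ is the bounded contribution of the two semicircular end caps. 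Hence, by the void probability of the Poisson process,
\begin{equation}
f(r)=\exp\bigl(-\lambda(2r\sinh R+A_0(R))\bigr)=e^{-\lambda A_0(R)}\,e^{-2\lambda\sinh(R)\,r}.
\end{equation}
Thus $f$ is an exact exponential with rate $\alpha(\lambda):=2\lambda\sinh R$, and once the critical intensity is identified as the value where $\alpha=1$, namely $\lambda=1/(2\sinh R)$, statement (3) is immediate: there $f(r)=c_R\,e^{-r}$ with $c_R:=e^{-A_0(R)/(2\sinh R)}\in(0,1)$, giving simultaneously $f(r)\le e^{-r}$ and $f(r)\ge c_R\,e^{-r}$.

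Next I would locate the critical intensity and prove existence below it by an energy argument on the boundary. Fixing $o$, parametrise rays by $\theta\in S^1=\partial\H^2$, write $\gamma_\theta$ for the ray from $o$ in direction $\theta$, and set $p(n):=\Pb{\gamma_\theta\text{ vacant to radius }n}\asymp e^{-\alpha n}$. Define the random measures $d\mu_n(\theta)=p(n)^{-1}\mathbf 1\{\gamma_\theta\text{ vacant to radius }n\}\,d\theta$, so that $\E\mu_n(S^1)$ is constant. The crux is the second moment: two rays from $o$ at angle $\phi$ stay within distance $2R$ only out to radius $\rho^*(\phi)\approx\log(1/\phi)$ (geodesics from a point diverge exponentially), so the union of their tubes has area $A$ with $\lambda A=2\alpha n-\alpha\log(1/\phi)+O(1)$, whence
\begin{equation}
\frac{\Pb{\theta,\theta'\text{ both vacant to }n}}{p(n)^2}\asymp|\theta-\theta'|^{-\alpha}.
\end{equation}
Therefore $\E\mu_n(S^1)^2\asymp\iint_{S^1\times S^1}|\theta-\theta'|^{-\alpha}\,d\theta\,d\theta'$, which is finite iff $\alpha<1$. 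By Paley--Zygmund and a weak--limit argument this gives, for $\alpha<1$, a nonzero limit measure carried by the vacant directions, so vacant rays exist with positive probability, while the energy diverges for $\alpha\ge1$. This pins the threshold at $\alpha=1$, i.e.\ $\lambda=1/(2\sinh R)$, and yields $0<\barvc<\infty$. For lines I would run the identical scheme over the basepoint--free space of boundary pairs $(\xi,\eta)$, getting the same threshold; since a line produces rays while a ray need not extend to a line, vacant lines then exist for every $\lambda<1/(2\sinh R)$, forcing $\vc\ge\barvc$, and ``lines are rarer than rays'' gives $\vc\le\barvc$, so $\vc=\barvc$. The occupied statements follow from the same two moments with the tube--avoidance functional replaced by the coverage functional.

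The hard part is the borderline case $\alpha=1$, where the energy diverges and the second--moment method only fails to \emph{produce} a ray rather than excluding one; this is the content of (1) and (2) at criticality. Here I would exploit the exactness of $f(r)=c_R\,e^{-r}$: refining directions so that level--$n$ arcs have angular width $\asymp e^{-n}$ (the angular scale of the shadow of a ball at radius $n$), there are $\asymp e^{n}$ such arcs, each arc splits into $\asymp e$ sub--arcs at the next level, and the conditional probability that a direction survives the extra radius increment is $\asymp e^{-\alpha}=e^{-1}$. Thus the number of arcs still carrying a vacant direction is a branching--type process of mean offspring $\asymp e\cdot e^{-1}=1$, i.e.\ critical, and a critical branching process dies out almost surely. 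Making this rigorous requires controlling the (logarithmic) correlations between shadows at different scales using the precise exponential rate; it shows the random set of vacant directions from $o$ is a.s.\ empty, recovering Kahane's boundary case, and the analogue on boundary pairs shows there are a.s.\ no vacant lines, giving (1).

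Finally, statement (2) is stronger than (1)---a line yields two rays, so ``no rays'' implies ``no lines''---and to upgrade ``no vacant ray from the fixed point $o$'' to ``no vacant ray anywhere'' I would use that all geodesic rays to a common boundary point $\xi$ are mutually asymptotic, together with a mass--transport / ergodicity argument transferring the zero--probability statement from $o$ to a positive density of admissible basepoints, which contradicts the existence of a single vacant ray. I expect this critical--branching extinction, and the passage from a fixed basepoint to all basepoints, to be the principal obstacle; everything else reduces to the tube computation and the energy estimate above.
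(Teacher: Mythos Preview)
Your argument is essentially complete for $\VC$ and matches the paper's route there: the tube-area computation gives $\alpha=2\lambda\sinh R$, and the second-moment/Paley--Zygmund argument on boundary directions establishes existence of rays and lines when $\alpha<1$. But the theorem as stated is primarily about $\OC$, and this is where your proposal has a genuine gap.

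You write that ``the occupied statements follow from the same two moments with the tube-avoidance functional replaced by the coverage functional,'' and later ``exploit the exactness of $f(r)=c_R e^{-r}$'' at criticality. For $\OC$ there is no such exactness: the probability that a segment of length $r$ is covered by the Boolean model is not a void probability and has no closed form $c\,e^{-\alpha r}$. The paper obtains only the two-sided bound $c\,e^{-\alpha r}\le f(r)\le e^{-\alpha r}$, and it does so abstractly via Fekete's lemma, using that $f$ is supermultiplicative (from positive correlations) and that $f(\cdot)/f(R_0)$ is submultiplicative (from independence at distance $R_0$). The value of $\alpha$ for $\OC$ is then characterised only implicitly by a renewal-type integral equation, not by a tube-area formula. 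Since your critical-case argument and your derivation of part~(3) both lean on the exact identity $f(r)=c_R e^{-r}$, neither transfers to $\OC$.

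Separately, your treatment of the borderline $\alpha=1$ is the place where you diverge most from the paper and where your outline is weakest. You propose a critical-branching extinction heuristic and flag it as the ``principal obstacle.'' The paper avoids this entirely: its Lemma covering $\alpha\ge 1$ uses a first-moment bound to show that the number $|X_r|$ of ``good'' directions on $\partial B(o,r)$ has $\E|X_r|=O(1)$, so $\liminf_r|X_r|<\infty$ a.s., and then a blocking step (using independence at distance $R_0$ and positive correlations) gives $\P[X_{r'}=\emptyset\mid \gZ\cap B(o,r)]\ge p^{|X_r|}$, forcing $\min_r|X_r|=0$ a.s. This yields ``no rays through $o$'' directly, and a countable cover of $\H^2$ by balls gives ``no rays anywhere,'' bypassing the mass-transport step you propose. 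The key technical input making the first-moment bound work with $A$-events rather than exact containment is a lemma showing $\P[Q(x,y,\epsilon)]\ge(1-c\epsilon)\P[A(x,y,\epsilon)]$; you have no analogue of this, and without it the first-moment count for $\OC$ does not close.

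In short: for $\VC$ your plan is correct and close to the paper's; for $\OC$ you need (i) a sub/supermultiplicativity argument to get the exponential bounds on $f$ without an explicit formula, and (ii) a first-moment plus blocking argument at $\alpha\ge 1$ in place of the branching heuristic.
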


An equation characterizing $\oc$
follows from our results (i.e.,~\eqref{e.aeq} with $\alpha=1$).

The key geometric property allowing for geodesic percolation to occur
for some $\lambda$ is the exponential divergence of geodesics. This
does not hold in Euclidean space. It is of interest to determine which
homogeneous spaces admit a regime of intensities with geodesics
percolating.

With regards to higher dimensions, we show that in hyperbolic space of any dimension $d\le 3$ and for any $(R,\lambda)\in(0,\infty)^2$, there can never be planes contained in the covered or vacant region of the Poisson Boolean model.

We also consider a Poisson process $Y$ on the Grassmannian of lines on $\H^2$. We show that if the intensity of $Y$ is sufficiently small, then there are lines in the complement of $Y$ (when $Y$ is viewed as a subset of $\H^2$), which means that $Y$ is not connected. On the other hand, if the intensity is large enough, then the complement of $Y$ contains no lines, which means that $Y$ is connected. At the critical intensity, $Y$ is connected.

\medskip

Our paper ends with a list of open problems.

\section{Lines appearing when the marginal is large}

\begin{figure}
\SetLabels
\endSetLabels
\centerline{\epsfysize=2.2in%
\AffixLabels{%
\epsfbox{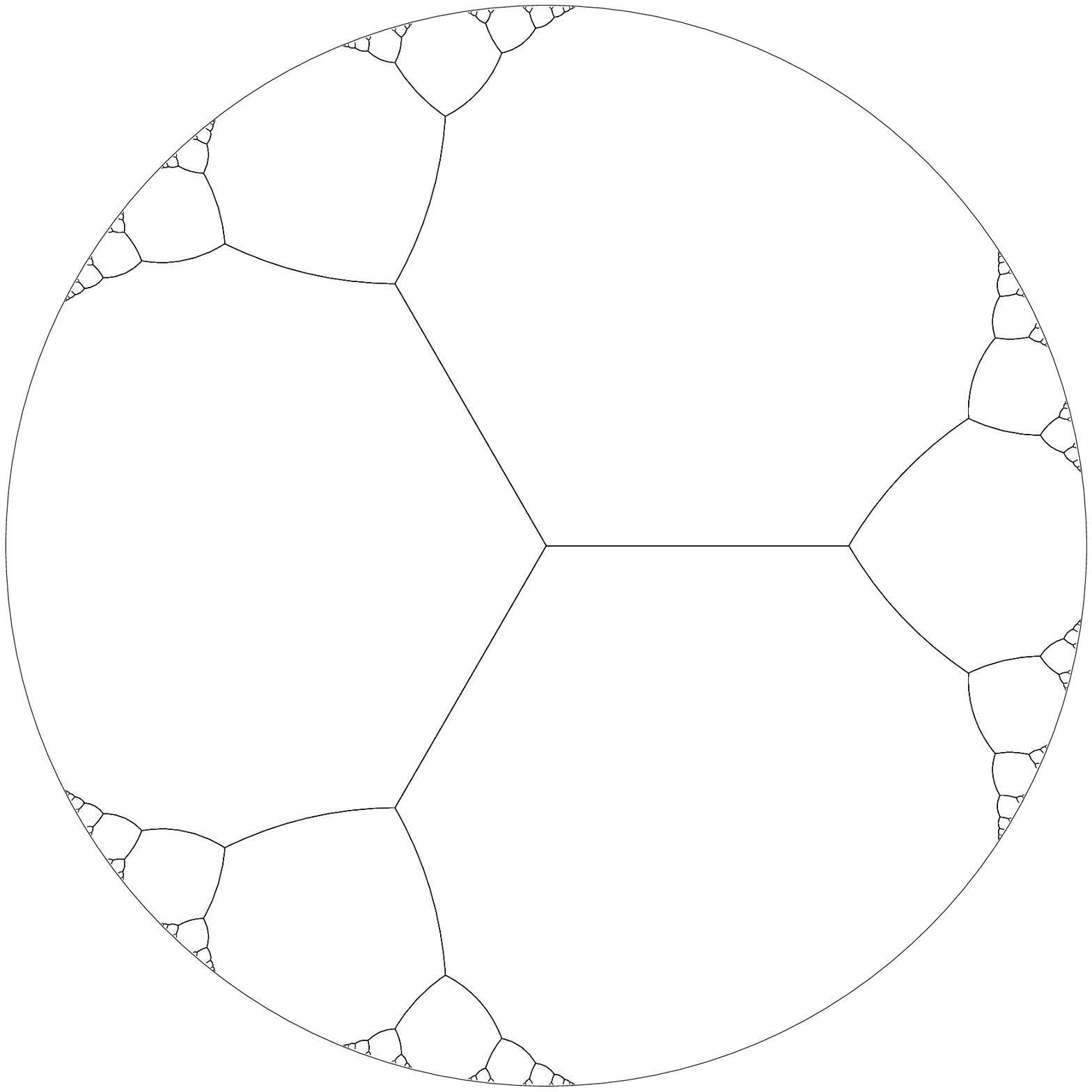}\qquad%
\epsfysize=2.2in\epsfbox{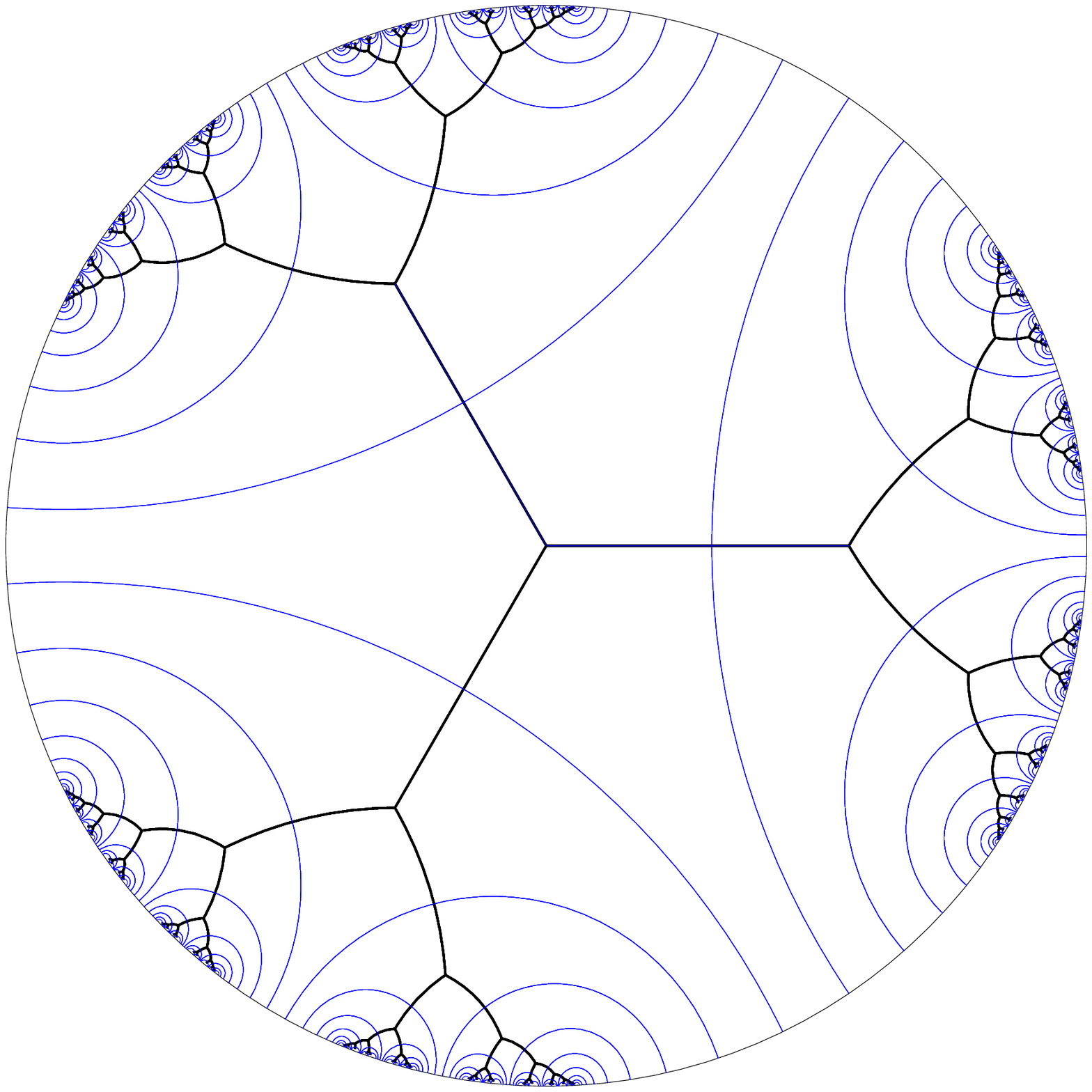}}
}
\begin{caption} {\label{f.hyptree}
A tree embedded in the hyperbolic plane,
in the Poincar\'e disk model. On the right appears the tree together with
some of its lines of symmetry.}
\end{caption}
\end{figure}

The proof of Theorem~\ref{t.gen} is based on a reduction to the tree
case.  We will need the following construction of a tree embedded in
$\H^2$, which is illustrated in Figure~\ref{f.hyptree}.  (This
construction should be rather obvious to the readers who are
proficient in hyperbolic geometry.)  Consider the hyperbolic plane in
the Poincar\'e disk model.  Let $o\in\H^2$ correspond to the center of
the disk.  Let $A_0$ be an arc on the unit circle of length smaller
than $2\,\pi/3$.  Let $A_j$ denote the rotation of $A_0$ by
$2\,\pi\,j/3$; that is $A_j:= e^{2\pi j/3} A_0$, $j=1,2$.  Let $L_j$,
$j=0,1,2$, denote the hyperbolic line whose endpoints on the ideal
boundary $\p\H^2$ are the endpoints of $A_j$.  Let $\Gamma$ denote the
group of hyperbolic isometries that is generated by the reflections
$\gamma_0,\gamma_1$ and $\gamma_2$ in the lines $L_0,L_1$ and $L_2$,
respectively.  If $w=(w_1,w_2,\dots,w_n)\in \{0,1,2\}^n$, then let
$\gamma_w$ denote the composition $\gamma_{w_1}\circ
\gamma_{w_2}\circ\cdots\circ \gamma_{w_n}$.  We will say that $w$ is
\emph{reduced} if $w_{j+1}\ne w_j$ for $j=1,2,\dots,n-1$.  A simple
induction on $n$ then shows that $\gamma_w (o)$ is separated from $o$
by $L_{w_1}$ when $w$ is reduced and $n>0$.  In particular, for
reduced $w\ne ()$, we have $\gamma_w(o)\ne o$ and $\gamma_w\ne
\gamma_{()}$.  Clearly, every $\gamma_w$ where $w$ has $w_j=w_{j+1}$
for some $j$ is equal to $\gamma_{w'}$ where $w'$ has these two
consecutive elements of $w$ dropped.  It follows that $\Gamma$ acts
simply and transitively on the orbit $\Gamma o$.  (\lq\lq
Simply\rq\rq\ means that $\gamma v=v$ where $\gamma\in\Gamma$ and
$v\in\Gamma o$ implies that $\gamma$ is the identity.)  Now define a
graph $T$ on the vertex set $\Gamma o$ by letting each $\gamma(o)$ be
connected by edges to the three points $\gamma \circ \gamma_j (o)$,
$j=0,1,2$. Then $T$ is just the $3$-regular tree embedded in the
hyperbolic plane. In fact, this is a Cayley graph of the group
$\Gamma$, since we may identify $\Gamma$ with the orbit $\Gamma o$.
(One easily verifies that $\Gamma$ is isomorphic to the free product
$\Z_2*\Z_2*\Z_2$.)

We will need a few simple properties of this embedding of the 3-regular tree in
$\H^2$.
It is easy to see that every simple path
$v_0, v_1,\dots$ in $T$ has a unique limit point on the ideal boundary $\p\H^2$.
(Figure~\ref{f.hyptree} does not lie.)
Moreover, if $v_0=o$ and $v_1= \gamma_j(o)$, then the limit point will
be in the arc $A_j$. If $(v_j:j\in\Z)$ is a bi-infinite simple path in
$T$ with $v_0=o$, then its two limit points on the ideal boundary will
be in two different arcs $A_j$.  Hence, the distance from $o$ to the
line in $\H^2$ with the same pair of limit points on $\p\H^2$ is
bounded by some constant $R$, which does not depend on the path
$(v_j:j\in\Z)$.  Invariance under the group $\Gamma$ now shows that
for every bi-infinite simple path $\beta$ in $T$, the hyperbolic line
$L_\beta$ joining its limit points passes within distance $R$ from
each of the vertices of $\beta$. It follows that there is some
constant $R'>0$ such that $L_\beta$ is contained in the
$R'$-neighborhood of the set of vertices of $\beta$.

We are now ready to prove our first theorem.
\proofof{Theorem~\ref{t.gen}}
We use the above construction of $T$, $\Gamma$ and the constant $R'$.
Given $\gZ$, let $\omega\subset V(T)$ denote the set
of vertices $v\in V(T)$ such that the ball $B(v,R')$ is contained in
$\gZ$. Then $\omega$ is a (generally dependent) site percolation on $T$ and its
law is invariant under $\Gamma$.
Set $q:=\Pb{o\in\omega}$.
By~\cite{BLPS}, there is some $p_0\in(0,1)$ such that if $q\ge p_0$, then
$\omega$ has infinite connected components with positive probability.
(We need to use~\cite{BLPS}, rather than~\cite{H}, since the group $\Gamma$ is
not the full automorphism group of $T$.)
Let $N$ be the number of balls of radius $1$ that are sufficient to cover
$B(o,R')$.
Now suppose that $\Pb{B(o,1)\subset\gZ}> 1- (1-p_0)/(2N)$.
Then a sum bound implies that $q> (p_0+1)/2$.
Therefore, if we intersect $\omega$ with an independent
Bernoulli site percolation with marginal $p>(p_0+1)/2$, the resulting
percolation will still have infinite components with positive probability,
by the same argument as above. Thus, we conclude that with positive
probability $\omega$ has infinite components with more than one end
and therefore also bi-infinite simple paths. The line determined by
the endpoints on $\p\H^2$ of such a path will be contained in
$\gZ$, by the definition of $R'$. The proof is thus complete.
\qed

\section{Lines in well-behaved percolation}\label{well}

The proofs of the statements in Theorem~\ref{mainthm}
concerning $\OC$ are essentially the same as the proofs
concerning $\VC$. We therefore find it worthwhile to employ an
axiomatic approach, which will cover both cases.

\medskip

\begin{definition}\label{d.well}
In the following, we fix a closed disk $B\subset \H^2$ of radius $1$.
A \emph{well-behaved percolation} on $\H^2$ is a random closed subset
$\gZ\subset\H^2$ satisfying the following assumptions.
\begin{enumerate}
\item The law of $\gZ$ is invariant under isometries of $\H^2$.
\item The set $\gZ$ satisfies positive correlations; that is,
for every pair $g$ and $h$ of bounded increasing measurable functions
of $\gZ$, we have $$\Eb{g(\gZ)\,h(\gZ)}\ge \Eb{g(\gZ)}\,\Eb{h(\gZ)}.$$
\item There is some $R_0<\infty$ such that $\gZ$ satisfies
  independence at distance $R_0$, namely, for every pair of subsets
  $A,A'\subset \H^2$ satisfying $\inf\{d(a,a'):a\in A,a'\in A'\}\ge
  R_0$, the intersections $\gZ\cap A$ and $\gZ\cap A'$ are
  independent.
\item The expected number $m$ of connected components of $B\setminus
  \gZ$ is finite.
\item The expected length $\ell$ of $B\cap \partial\gZ$ is finite.
\item $p_0:= \Pb{B\subset\gZ}>0$.
\end{enumerate}
\end{definition}

Invariance under isometries implies that $m$, $\ell$ and $p_0$ do not depend on the choice of $B$.
We say that $\gZ$ is $\Lambda$-well behaved, if it is well-behaved and
$p_0,m^{-1},\ell^{-1},R_0^{-1}>\Lambda$.
Many of our estimates below can be made to depend only on $\Lambda$.
In the following, we assume that $\gZ$ is $\Lambda$-well behaved,
where $\Lambda>0$, and use $O(g)$ to denote any quantity bounded by $c\,g$, where
$c$ is an arbitrary constant that may depend only on $\Lambda$.

\medskip

If $x,y\in\H^2$, let $[x,y]_s$
denote the union of all line segments $[x',y']$ where $d(x,x')< s$
and $d(y,y')<s$. Let $A(x,y,s)$ be the event that there is some
connected component of $\gZ\cap [x,y]_s$ that intersects
$B(x,s)$ as well as $B(y,s)$,
and let $Q(x,y,s)$ be the event that $[x,y]_s\subset \gZ$.

\begin{lemma}\label{lemma a}
There is a constant $c=c(\Lambda)<\infty$, which depends only on $\Lambda$,
such that for all $x,y\in\H^2$ satisfying $d(x,y)\ge 4$ and for all $\eps>0$
\begin{equation}
\label{e.a}
  \Pb{Q(x,y,\eps)} > (1-c\,\eps)\, \Pb{ A(x,y,\eps)}.
\end{equation}
\end{lemma}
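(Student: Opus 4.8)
The plan is to exploit that $Q(x,y,\eps)\subseteq A(x,y,\eps)$: full coverage of the tube $[x,y]_\eps$ trivially produces a spanning cluster, so $\Pb{A(x,y,\eps)}\ge\Pb{Q(x,y,\eps)}$ holds automatically, and it suffices to prove $\Pb{A(x,y,\eps)\setminus Q(x,y,\eps)}\le c\,\eps\,\Pb{A(x,y,\eps)}$. Equivalently, conditioned on the existence of a component of $\gZ$ joining $B(x,\eps)$ to $B(y,\eps)$ inside the tube, the tube fails to be \emph{entirely} covered only with probability $O(\eps)$. So the whole point is a quantitative ``crossing forces coverage'' statement for a thin tube.

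The first thing I would isolate is a purely geometric input: the exponential divergence of geodesics in $\H^2$. Writing $L=d(x,y)\ge 4$ and comparing the segments $[x',y']$ with $x'\in B(x,\eps)$, $y'\in B(y,\eps)$ to the central geodesic $[x,y]$ by a Jacobi-field (comparison) estimate, the transversal width of $[x,y]_\eps$ at arclength $t$ from $x$ is at most $c\,\eps\,(e^{-t}+e^{-(L-t)})$: the tube has width of order $\eps$ near the two endpoints but is exponentially thin in the middle. The consequence I care about is that the total transversal extent $\int_0^L w(t)\,dt\le c\,\eps$, and, crucially, this bound is uniform in $L$, since the geometric decay absorbs the length of the tube. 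This is exactly what will let the final estimate be $O(\eps)$ rather than $O(\eps\,L)$.

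Next I would convert a coverage defect into a \emph{local} boundary feature. On $A\setminus Q$, fix a spanning cluster $C$ and an uncovered point $p\in[x,y]_\eps$. The perpendicular cross-section $J_p$ through $p$ separates the $x$-end from the $y$-end of the tube, so the connected set $C$, which meets both ends, must cross $J_p$; hence $J_p$ carries both a point of $\gZ$ and the point $p\notin\gZ$, forcing $\p\gZ\cap J_p\ne\emptyset$. Thus every defect is witnessed by a point of $\p\gZ$ lying on a transversal of length at most $w(t)$. Slicing the tube into unit pieces $I_k$, $k=0,\dots,\lceil L\rceil-1$, the finiteness of the boundary length $\ell$ and of the component count $m$, together with isometry invariance, should give that the expected number of maximal excursions of $\p\gZ$ crossing the width-$w_k$ part of $I_k$ is $O(w_k)$; summing and using $\sum_k w_k\le c\int_0^L w(t)\,dt=O(\eps)$ from the previous step bounds the expected number of defect-carrying slices by $O(\eps)$.

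The step I expect to be hardest is upgrading this to the \emph{relative} bound $\Pb{A\setminus Q}\le c\,\eps\,\Pb{A}$. A bare application of positive correlations, writing $\Pb{A\cap Q_k^c}\le\Pb{A}\,\Pb{Q_k^c}$ with $Q_k$ the event that $I_k$ is fully covered, is far too lossy: $\Pb{Q_k^c}$ is bounded below by the constant probability that the central unit sub-segment has a gap, so $\sum_k\Pb{Q_k^c}\asymp L$ and carries no factor of $\eps$. The genuine phenomenon is that conditioning on the spanning cluster suppresses defects much more strongly in the thin part of the tube: a gap that $C$ nonetheless routes around must fit transversally inside the available width $w_k$, an event of conditional probability $O(w_k)$ given $A$, not $O(1)$. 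Making this ``a detour costs $O(w_k)$'' statement rigorous from the axioms alone — feeding the cross-section estimate into independence at distance $R_0$ to decorrelate the slices, and using positive correlations only to compare with the unconditioned coverage probability — is the technical heart, and it is precisely where the exponential thinness $\sum_k w_k=O(\eps)$ must enter to keep the bound uniform over all $x,y$ with $d(x,y)\ge 4$.
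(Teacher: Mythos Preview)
Your architecture matches the paper's: exponential thinning of the tube so that $\sum_k w_k=O(\eps)$ uniformly in $L$, decomposition into unit-length slices, and then independence at distance $R_0$ together with positive correlations to pass from a per-slice estimate to the relative bound $\Pb{A\setminus Q}\le c\,\eps\,\Pb{A}$. That part is right, and the paper executes it in exactly this order.

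The gap is your local estimate. You assert that $\ell$, $m$, and invariance ``should give that the expected number of maximal excursions of $\p\gZ$ crossing the width-$w_k$ part of $I_k$ is $O(w_k)$''. But $\ell$ alone only yields $\Pb{\p\gZ\cap B(z,w)\ne\emptyset}=O(w)$; covering a unit-length strip of width $w$ by $O(1/w)$ such balls and taking a union bound gives $O(1)$, not $O(w)$. Nor does ``counting excursions of $\p\gZ$'' naturally bring $m$ into play. As written, nothing forces the per-slice $A\setminus Q$ probability down to $O(w_k)$, and without that the sum over slices is $O(L)$, not $O(\eps)$.

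The paper's mechanism for this step is an \emph{averaging} (pigeonhole) argument that uses $m$ in a specific way you have not identified. Fix the unit ball $B$ and slice it into $\sim 1/\eps$ parallel strips $S_j$ of width $2\eps$. If a strip carries a $\gZ$-crossing between the two arcs of $\p B$ but is not fully covered, then it traps a component of $B\setminus\gZ$; since the expected number of such components is at most $m$, the expected number of such ``bad'' strips is at most $m+1$. The residual case (the defect sits within $O(\eps)$ of an endpoint of the chord) is handled by $\ell$ via $\Pb{\p\gZ\cap B(z,\eps)\ne\emptyset}=O(\eps)$. Pigeonholing over $j$ then produces \emph{some} chord of length in $(1,2]$ with $\Pb{A\setminus Q}=O(\eps)$, which is bootstrapped to $g(2,\eps)=O(\eps)$; and this same argument run at scale $w_k$ gives $g(2,w_k)=O(w_k)$, which is exactly what your summation needs. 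This is the missing idea in your sketch; once you have it, the rest of your outline coincides with the paper's proof.
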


\begin{proof}
Observe that the expected minimal number of disks of small radius $\eps$
that are needed to cover $\partial\gZ\cap B$ is $O(\ell/\eps)$.
It follows by invariance that
\begin{equation}
\label{e.bdry}
  \Pb{B(x,\eps)\cap \p\gZ\ne\emptyset}= O(\eps)\,\ell=O(\eps)
\end{equation}
holds for $x\in\H^2$.

Let $\gamma:\R\to\H^2$ denote a hyperbolic line parameterized by arclength,
and let $L_t$ denote the hyperbolic line through $\gamma(t)$ which is
orthogonal to $\gamma$.
Set
$$
g(r,s):=\Pb{A\bigl(\gamma(0),\gamma(r),s\bigr)\setminus Q\bigl(\gamma(0),\gamma(r),s\bigr)}.
$$
By invariance, we have $\Pb{A(x,y,s)\setminus Q(x,y,s)}=g\bigl(d(x,y),s\bigr)$.

Set $B:= B\bigl(\gamma(0),1\bigr)$.
Fix some $\eps\in(0,1/10)$.
Let $S_j$ denote the intersection of $B$ with the open strip between $L_{2j\eps}$ and
$L_{2(j+1)\eps}$, where $j\in J:=\N\cap[0,\eps^{-1}/10]$.
Let $x_j$ and $y_j$ denote the two points in $L_{(2j+1)\eps}\cap \p B$.
Let $J_1$ denote the set of $j\in J$ such that
$S_j$ is not contained in $\gZ$ but there is a connected component of
$\gZ\cap S_j$ that joins the
two connected components of $S_j\cap\partial B$.
Observe that the number of connected components of $B\setminus\gZ$ is
at least $|J_1|-1$. Hence $\Eb{|J_1|}\le m+1$.
Let $J_2$ denote the set of $j\in J$ such that
$A(x_j,y_j,\eps)\setminus Q(x_j,y_j,\eps)$ holds.
Note that if $j\in J_2\setminus J_1$, then
$\p \gZ$ is within distance $O(\eps)$ from
$x_j\cup y_j$.
Therefore, $\Pb{j\in J_2\setminus J_1} =O(\eps)\,\ell$
holds for every $j\in J$, by~\eqref{e.bdry}. Consequently,
$$
\Eb{|J_2|}\le \Eb{|J_2\setminus J_1|}+
\Eb{|J_1|}
\le
O(\eps)\,\ell\,|J|+m+1
=O(1)\,.
$$
Thus, there is at least one $j=j_\eps\in J$ satisfying
\begin{equation}
\label{e.xyeps}
\begin{aligned}
&
\Pb{A(x_j,y_j,\eps)\setminus Q(x_j,y_j,\eps)}
=\Pb{j\in J_2}
\\& \qquad\qquad\qquad
 \le O(1)/|J|=O(\eps)\,.
\end{aligned}
\end{equation}
Set $r_\eps:=d(x_{j_\eps},y_{j_\eps})$, and note that $r_\eps\in(1,2]$.
Now suppose that $x,y\in\H^2$ satisfy $d(x,y)=2$.
Let $x_0$ be the point in $[x,y]$ at distance $r_\eps$ from $y$, and
let $y_0$ be the point in $[x,y]$ at distance $r_\eps$ from $x$.
Observe that $A(x,y,\eps)\subset A(x_0,y,\eps)\cap A(x,y_0,\eps)$.
Moreover, since $[x,y]_\eps\subset [x,y_0]_\eps\cup [x_0,y]_\eps$,
we have $Q(x,y,\eps)\supset Q(x,y_0,\eps)\cap Q(x_0,y,\eps)$.
Thus,
$$
A(x,y,\eps)\setminus Q(x,y,\eps)\subset
\bigl(A(x_0,y,\eps)\setminus Q(x_0,y,\eps)\bigr)
\cup
\bigl(A(x,y_0,\eps)\setminus Q(x,y_0,\eps)\bigr)
$$
and therefore~\eqref{e.xyeps} and invariance gives
\begin{equation}
\label{e.xy2}
g(2,\eps) \le 2\,
\Pb{A(x_{j_\eps},y_{j_\eps},\eps)\setminus Q(x_{j_\eps},y_{j_\eps},\eps)}
=O(\eps)\,.
\end{equation}
The same argument shows that
\begin{equation}
\label{e.qadd}
g(r',\eps)\le 2\,g(r,\eps),\qquad\text{if }2\le r<r'\le 2\,r\,.
\end{equation}

We will now get a bound on $g(2\,k,\eps)$ for large $k\in\N$.
For $j\in[k]:=\N\cap[0,k]$, let $r_j$ be the distance from
$\gamma(2\,j)$ to the complement of $[\gamma(0),\gamma(2\,k)]_\eps$.
Let $A_j:=A\bigl(\gamma(2\,j),\gamma(2\,j+2),r_j\vee r_{j+1}\bigr)$,
$Q_j:=Q\bigl(\gamma(2\,j),\gamma(2\,j+2),r_j\vee r_{j+1}\bigr)$,
where $j\in[k-1]$.
Also set $\bar A:= A\bigl(\gamma(0),\gamma(2\,k),\eps\bigr)$.
Then
$$
Q\bigl(\gamma(0),\gamma(2\,k),\eps\bigr)\supset\bigcap_{j=0}^{k-1}
Q_j\,.
$$
Hence,
\begin{equation}
\label{e.gsum}
g(2\,k,\eps) \le \sum_{j=0}^{k-1} \Pb{\bar A\setminus Q_j}.
\end{equation}
We now claim that
\begin{equation}
\label{e.AQ}
\Pb{\bar A\setminus Q_j} =O(1)\,\Pb{\bar A}\,\Pb{A_j\setminus Q_j},
\end{equation}
where the implied constant depends only on $p_0$ and $R_0$.
Let $j':=\lfloor j-R_0/2-2\rfloor$ and $j'':=\lceil j+R_0/2+3\rceil$.
Suppose first that $j'>0$ and $j''<k$.
Let $\bar A'(j)$ denote the event that $\gZ\cap[\gamma(0),\gamma(2\,k)]_\eps$
contains a connected component that intersects
both $B\bigl(\gamma(0),\eps\bigr)$ and
$B\bigl(\gamma(2\,j'),\eps\bigr)$, and let
$\bar A''(j)$ denote the event that $\gZ\cap[\gamma(0),\gamma(2\,k)]_\eps$
contains a connected component that intersects
both $B\bigl(\gamma(2\,j''),\eps\bigr)$ and $B\bigl(\gamma(2\,k),\eps\bigr)$.
Then $\bar A\subset \bar A'(j)\cap \bar A''(j)\cap A_j$.
Independence at distance $R_0$ therefore gives
$$
\Pb{\bar A\setminus Q_j} \le
\Pb{\bar A'(j)\cap \bar A''(j)}\,\Pb{A_j\setminus Q_j}.
$$
Now note that the fact that $\gZ$ satisfies positive correlations shows
that $$\Pb{ \bar A'(j)\cap \bar A''(j)}\le O(1)\,\Pb{\bar A},$$
where the implied constant depends only on $R_0$ and $p_0$.
Thus, we get~\eqref{e.AQ} in the case that $j'>0$ and $j''<k$.
The general case is easy to obtain (one just needs to drop
$\bar A'(j)$ or $\bar A''(j)$ from consideration).
Now,~\eqref{e.gsum} and~\eqref{e.AQ} give
\begin{equation}
\label{e.gg}
g(2\,k,\eps)\le O(1)\, \Pb{\bar A}\sum_{j=0}^{k-1} g(2,r_j\vee r_{j+1})\,.
\end{equation}
Note that there is a universal constant $a\in(0,1)$
such that $ r_j\le a^{|j|\wedge|k-j|}\,\eps$.
(This is where hyperbolic geometry comes into play.)
Hence, we get by~\eqref{e.xy2} and~\eqref{e.gg} that
$g(2\,k,\eps)\le O(1) \Pb{\bar A} \eps$,
where the implied constant may depend on $\ell,m,R_0$ and $p_0$.
This proves~\eqref{e.a} in the case where $d(x,y)$ is divisible by $2$.
The general case follows using~\eqref{e.qadd} with
$r'=d(x,y)$ and $r=2\,\lfloor r'/2\rfloor$.
\qed
\end{proof}
\medskip

Let $f(r)$ denote the probability that a fixed line segment of length
$r$ is contained in $\gZ$. Clearly,
$$
  \Pb{Q(x,y,s)} \le f(\mbox{length}[x,y]) \le \Pb{A(x,y,s)},
$$
and Lemma~\ref{lemma a} shows that for s sufficiently small the upper and lower
bounds are comparable.

\begin{lemma}\label{l.b}
There is a unique $\alpha\ge0$ (which depends on
the law of $\gZ$) and some $c(\Lambda)>0$
(depending only on $\Lambda$) such that
\begin{equation}
\label{e.b}
c\,e^{-\alpha r}\le f(r)\le e^{-\alpha r}
\end{equation}
holds for every $r\ge 0$.
\end{lemma}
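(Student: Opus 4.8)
The plan is to show that $\log f$ is linear in $r$ up to a bounded additive error, by proving that $f$ is simultaneously supermultiplicative and approximately submultiplicative. First I would record that $f$ is non-increasing (a longer segment contains a shorter one) and strictly positive (cover a segment by finitely many radius-$1$ balls and combine $p_0>0$ with positive correlations), so that $\log f$ is a finite, measurable, non-positive function. For supermultiplicativity, fix $x,z\in\H^2$ with $d(x,z)=s+t$ and let $y\in[x,z]$ satisfy $d(x,y)=s$; then $\{[x,z]\subset\gZ\}=\{[x,y]\subset\gZ\}\cap\{[y,z]\subset\gZ\}$, and as both events on the right are increasing in $\gZ$, positive correlations give $f(s+t)\ge f(s)f(t)$. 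Hence $\log f$ is superadditive, and the superadditive form of Fekete's lemma shows that $\lim_{r\to\infty}\frac{1}{r}\log f(r)=\sup_{r>0}\frac{1}{r}\log f(r)$ exists; writing $\alpha$ for its negative, we have $\alpha\ge 0$ and $\log f(r)\le -\alpha r$ for all $r$, which is exactly the upper bound $f(r)\le e^{-\alpha r}$. This also identifies $\alpha$ as the exponential decay rate of $f$, so it is the only exponent for which \eqref{e.b} can hold, which gives uniqueness.

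For the lower bound I would use independence at distance $R_0$ to obtain approximate submultiplicativity. Given $[x,z]$ of length $s+t+R_0$, pick $y',y''\in[x,z]$ so that $[x,y']$ has length $s$, $[y'',z]$ has length $t$, and the middle gap $[y',y'']$ has length exactly $R_0$. Since $[x,z]$ is a geodesic, distances along it add, so $d([x,y'],[y'',z])=d(y',y'')=R_0$; therefore the events $\{[x,y']\subset\gZ\}$ and $\{[y'',z]\subset\gZ\}$, which are determined by $\gZ$ on the two subsegments, are independent. As $\{[x,z]\subset\gZ\}$ is contained in their intersection, this yields $f(s+t+R_0)\le f(s)f(t)$. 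Iterating with all blocks equal to a common value $s$ gives $f(ns+(n-1)R_0)\le f(s)^n$. Taking logarithms, dividing by $n$, and letting $n\to\infty$ while using $\frac{1}{r}\log f(r)\to-\alpha$ (so that $\frac{1}{n}\log f(ns+(n-1)R_0)\to-\alpha(s+R_0)$) produces $\log f(s)\ge-\alpha(s+R_0)$, that is, $f(s)\ge e^{-\alpha R_0}e^{-\alpha s}$ for every $s\ge0$.

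It remains to see that the constant $c:=e^{-\alpha R_0}$ depends only on $\Lambda$, and this is the step that needs the most care. The key point is an a priori upper bound on $\alpha$: from $\frac{1}{r}\log f(r)\le-\alpha$ applied at $r=2$ we get $\alpha\le-\frac{1}{2}\log f(2)$, and a segment of length $2$ lies inside the closed radius-$1$ ball centered at its midpoint, so $f(2)\ge\Pb{B\subset\gZ}=p_0>\Lambda$; hence $\alpha\le\frac{1}{2}\log(1/\Lambda)$. Since $R_0<1/\Lambda$, the product $\alpha R_0$ is bounded in terms of $\Lambda$ alone, so $c=e^{-\alpha R_0}$ is bounded below by a positive constant depending only on $\Lambda$, completing the proof. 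The main obstacle is not either multiplicativity estimate in isolation but matching the exponents: supermultiplicativity alone would only give a lower bound of the form $c_\delta\,e^{-(\alpha+\delta)r}$, and it is precisely the gap-submultiplicativity coming from independence at distance $R_0$ --- with the gap kept equal to the fixed constant $R_0$ --- that upgrades this to the sharp exponent $\alpha$ with a constant uniform in the law of $\gZ$.
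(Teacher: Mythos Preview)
Your proof is correct and follows essentially the same route as the paper: supermultiplicativity from positive correlations plus Fekete gives the upper bound, and the gap-submultiplicativity $f(s+t+R_0)\le f(s)f(t)$ from independence at distance $R_0$ gives the lower bound with the sharp exponent. The only difference is cosmetic: the paper combines the two inequalities to make $f(r)/f(R)$ submultiplicative and applies Fekete a second time (obtaining $c=f(R)$, then bounding $f(R)$ via positive correlations), whereas you iterate the gap inequality directly and pass to the limit (obtaining $c=e^{-\alpha R_0}$, then bounding $\alpha$ via $f(2)\ge p_0$); both yield a constant depending only on $\Lambda$.
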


\begin{proof}
Since the uniqueness statement is clear, we proceed to prove existence.
Positive correlations imply that
\begin{equation}
\label{e.supmul}
f(r_1+r_2)\ge f(r_1)\,f(r_2)\,,
\end{equation}
that is, $f$ is supermultiplicative.
Therefore, $-\log f(r)$ is subadditive, and Fekete's Lemma says that we must have
$$
\alpha:=\lim_{r\to\infty} \frac{-\log f(r)}r = \inf_{r>0} \frac{-\log f(r)}r\,.
$$
Since for every $r$ we have $\alpha\le -\log\bigl(f(r)\bigr)/r$,
the right inequality in~\eqref{e.b} follows.

On the other hand, if we fix some $R>R_0$, then independence at
distance larger than $R_0$ gives
$$
f(r_1)\,f(r_2)
\ge
f(r_1+R+r_2)
\overset{\eqref{e.supmul}}\ge f(r_1+r_2)\,f(R)\,.
$$
Dividing by $f(R)^2$, we find
that the function $r\mapsto f(r)/f(R)$ is submultiplicative.
Thus, by Fekete's lemma again,
$$
\lim_{r\to\infty} \frac{\log\bigl(f(r)/f(R)\bigr)}{r} =
\inf_{r>0} \frac{\log\bigl(f(r)/f(R)\bigr)}{r}\,.
$$
The left hand side is equal to $-\alpha$, and we get for every
$r>0$
$$
-\alpha \le \frac{\log\bigl(f(r)/f(R)\bigr)}{r}\,.
$$
By positive correlations, there is some $c=c(\Lambda)>0$ such that
$f(R)\ge c$, which implies the left inequality in~\eqref{e.b}.  \qed
\end{proof}
\medskip

\begin{lemma}\label{l.c}
If $\alpha\ge 1$ (where $\alpha$ is defined in Lemma~\ref{l.b}),
then a.s.\ there are no half-lines contained in $\gZ$.
\end{lemma}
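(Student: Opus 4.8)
The plan is to fix a base point $o\in\H$ and first prove that a.s.\ there is no half-line of $\gZ$ emanating from $o$; the passage to ``no half-line anywhere'' is then a routine consequence of invariance together with a countable covering of $\H$ by unit balls, so I would concentrate on rays from $o$. Since a geodesic ray from $o$ meets $\partial B(o,r)$ in a single point, let $U_r$ denote the random set of $z\in\partial B(o,r)$ for which the length-$r$ segment $[o,z]$ lies in $\gZ$. By compactness the nested closed sets $U_r$ satisfy $\bigcap_r U_r\ne\emptyset$ iff $U_r\ne\emptyset$ for every $r$, so it suffices to show $\Pb{U_r\ne\emptyset}\to 0$.

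For the estimate I would discretize $\partial B(o,r)$. Fix a small $\eps>0$ and a net $z_1,\dots,z_{N_r}$ of spacing $\eps$, with $N_r=O(\eps^{-1}\sinh r)=O(\eps^{-1}e^{r})$. If a length-$r$ ray from $o$ lies in $\gZ$, its endpoint is within $\eps$ of some $z_i$ and the ray itself lies in the lens $[o,z_i]_\eps$, so the event $A(o,z_i,\eps)$ holds. Combining Lemma~\ref{lemma a} with the upper bound $f(r)\le e^{-\alpha r}$ of Lemma~\ref{l.b} gives $\Pb{A(o,z_i,\eps)}\le \Pb{Q(o,z_i,\eps)}/(1-c\eps)\le e^{-\alpha r}/(1-c\eps)$, so a union bound yields
\[
\Pb{U_r\ne\emptyset}\le\sum_{i}\Pb{A(o,z_i,\eps)}=O\!\left(e^{(1-\alpha)r}\right).
\]
When $\alpha>1$ this already finishes the proof: the right-hand side tends to $0$, so $\Pb{U_r\ne\emptyset}\to0$ and there is a.s.\ no ray from $o$.

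The hard part will be the borderline case $\alpha=1$, where the bound above is only $O(1)$ and the first-moment method stalls. This stalling is genuine: because $f(r)\asymp e^{-r}$ is possible, no sharper pointwise decay of $\Pb{A(o,z_i,\eps)}$ is available, and a direct self-consistency estimate for $\Pb{U_r\ne\emptyset}$ only reproduces the same constant. The phenomenon that kills rays at criticality is therefore of second order (fluctuations), not of first order (means).

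To exploit this I would replace the single-scale count by a multiscale branching argument. Fix a large $R>R_0$ and radii $r_n=nR$, and organize the $\eps$-nets on the circles $\partial B(o,r_n)$ into a rooted tree, declaring a level-$n$ point a child of the nearest level-$(n-1)$ point through which its ray passes; each parent then has $\approx e^{R}$ children. Call a child open if the portion of its ray in the annulus $\{r_{n-1}+R_0\le\dist(\cdot,o)\le r_n\}$ lies in $\gZ$, so that a half-line corresponds to an infinite open path. The $R_0$-wide radial gaps make distinct generations independent (by independence at distance $R_0$), while the expected number of open children per parent is $\approx e^{R}f(R)\asymp e^{(1-\alpha)R}=1$ at $\alpha=1$; since this offspring count is non-degenerate (a parent has no open child with positive probability), the process should be dominated by a critical or subcritical Galton--Watson process and hence die out a.s., giving $\Pb{U_\infty\ne\emptyset}=0$. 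The main obstacle I anticipate is making this domination honest: the ray segments of siblings are close near their common parent and so are not decoupled by independence at distance $R_0$, and one must also verify that the offspring mean is pinned to $\le 1$ rather than to some uncontrolled constant $>1$. I would address the first point by thinning each generation to an angularly separated subfamily (or via a positive-correlation bound), and the second by reading the offspring mean off the ratio $\E[M_{r_n}]/\E[M_{r_{n-1}}]$, whose constants cancel and which tends to $e^{(1-\alpha)R}=1$; extinction of the resulting branching process then completes the case $\alpha=1$.
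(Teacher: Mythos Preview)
Your treatment of the case $\alpha>1$ is fine and matches the paper's starting point. The gap is at $\alpha=1$, and it is a real one.

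The branching domination you sketch does not close. With the annular definition of ``open'', the expected number of open children of a fixed parent is of order
\[
e^{R}\,f(R-R_0)\ \asymp\ e^{R}\,e^{-\alpha(R-R_0)}\ =\ e^{\alpha R_0}\,e^{(1-\alpha)R},
\]
which at $\alpha=1$ is a constant of order $e^{R_0}$, not $\le 1$. Your proposed fix, reading the mean from the ratio $\E[M_{r_n}]/\E[M_{r_{n-1}}]$, does not help: that ratio is again only $O(1)$, and nothing pins it at or below $1$. So the dominating Galton--Watson process may well be supercritical. The independence issue you flag is also genuine: siblings' annular segments are close near the inner boundary, and positive correlations push joint survival \emph{up}, which is the wrong direction for an upper bound by independent offspring. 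Thinning to an angularly separated subfamily throws away a positive fraction of children but does not change the order of the offspring mean.

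The paper avoids branching altogether and uses the $O(1)$ first-moment bound directly. With $X_r$ your net-count on $\partial B(o,r)$, one has $\E|X_r|=O(1)$, so Fatou gives $\liminf_r|X_r|<\infty$ a.s. Then, using independence at distance $R_0$ together with positive correlations, one shows that for $r'>r+R_0+2$,
\[
\Pb{X_{r'}=\emptyset\mid \gZ\cap B(o,r)}\ \ge\ p^{|X_r|}
\]
for some fixed $p>0$ depending only on the law of $\gZ$ (each of the $|X_r|$ surviving tubes can be ``blocked'' in the fresh annulus with probability bounded below, and positive correlations let you multiply). Along a subsequence where $|X_r|$ stays bounded, the conditional probability of eventual extinction is therefore bounded away from $0$ infinitely often; a L\'evy $0$--$1$ argument then forces $\inf_r|X_r|=0$ a.s., hence no ray meets $B(o,s)$, and a countable cover finishes. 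This is the missing idea: rather than trying to force a critical branching process, you exploit that a uniformly bounded expected count, together with a uniform conditional chance of killing all survivors at a later scale, already implies extinction.
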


\begin{proof}
  Fix a basepoint $o\in\H^2$.
  Let $s=(2c)^{-1}$, where $c$ is the constant in~\eqref{e.a}.
  Then
\begin{equation}\label{e.AQf}
\Pb{A(x,y,s)}/2\le
\Pb{Q(x,y,s)} \le
f\bigl(d(x,y)\bigr) \le e^{-d(x,y)}
\end{equation}
holds for every $x,y\in\H^2$ satisfying $d(x,y)\ge 4$.
 For every integer $r\ge 4$ let $V(r)$ be a
  minimal collection of points on the circle $\partial B(o,r)$ such that the
  disks $B(z,s)$ with $z\in V$ cover that circle. Let $X_r$ be the set of points
  $z\in V(r)$ such that $A(o,z,s)$ holds. By (\ref{e.AQf})
\begin{equation}
\label{e.Xr}
\Eb{|X_r|} \le 2\,|V(r)|\,f(r) = O(1)\,s^{-1}\,\length\bigl(\partial B(o,r)\bigr)\, e^{-r}
= O(1)
\,,
\end{equation}
since we are treating $s$ as a constant and the length of $\partial B(o,r)$ is $O(e^r)$.

The rest of the argument is quite standard, and so we will be brief.
By~\eqref{e.Xr} and Fatou's lemma, we have
$\liminf_{r\to\infty}|X_r|<\infty$ a.s.
  Now fix some large $r$ and let $r'\in\N$ satisfy $r'>r+R_0+2$.
Since $\gZ\setminus B(o,r+R_0+1)$ is independent from $\gZ\cap
B(o,r)$, positive correlations implies that
\begin{equation}\label{e.Xr2}
\Pb{X_{r'}=\emptyset\md \gZ\cap B(o,r)} \ge p^{|X_r|},
\end{equation}
where $p>0$ is a constant (which we allow to depend on the law of
$\gZ$).  Since $\liminf_{r\to\infty} |X_r|<\infty$ a.s., it follows by~\eqref{e.Xr2}
that $\inf_r |X_r|=0$ a.s., which means that $\max\{r:
X_r\ne\emptyset\}<\infty$ a.s.  Therefore, a.s.\ there is no half-line
that intersects $B(o,s)$.  Since $\H^2$ can be covered by a countable
collection of balls of radius $s$, the lemma follows.  \qed
\end{proof}
\medskip

\begin{lemma}\label{l.d}
Suppose that $\alpha<1$. Then (i) a.s.\ $\gZ$ contains hyperbolic lines,
(ii) for every fixed $x\in\H^2$, there is a positive probability
that $\gZ$ contains a half-line containing $x$, and
(iii) for every fixed point $x$ in the ideal boundary $\p\H^2$
there is a.s.\ a geodesic line passing through
$x$ whose intersection with $\gZ$ contains a half-line.
\end{lemma}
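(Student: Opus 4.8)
The plan is to run a second moment argument on the set of ``visible'' points around a basepoint, reversing the mechanism of Lemma~\ref{l.c}: when $\alpha<1$ the first moment $\Eb{|X_r|}$ grows rather than stays bounded, and the task is to show this growth is genuine rather than carried by rare configurations with many visible points. Fix $o\in\H^2$ and, as in Lemma~\ref{l.c}, take $s=(2c)^{-1}$ and let $X_r$ be the set of $z$ in a minimal $s$-net $V(r)$ of $\partial B(o,r)$ with $A(o,z,s)$. Since $\Pb{A(o,z,s)}\ge f(d(o,z))\ge c\,e^{-\alpha r}$ by \eqref{e.b} while $|V(r)|\asymp e^{r}$, invariance gives $\Eb{|X_r|}\asymp e^{(1-\alpha)r}\to\infty$.

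The heart of the matter is the matching upper bound $\Eb{|X_r|^2}=O\bigl(\Eb{|X_r|}^2\bigr)$. For $z,z'\in V(r)$, the geodesics $[o,z]$ and $[o,z']$ run within distance $R_0$ of one another up to a \emph{branch radius} $\rho=\rho(z,z')$ and then diverge exponentially; this is where hyperbolic geometry enters, exactly as in the bound $r_j\le a^{|j|\wedge|k-j|}\eps$ used for Lemma~\ref{lemma a}. Writing $w$ for a point of $[o,z]$ at radius $\rho$, one has $A(o,z,s)\cap A(o,z',s)\subset A(o,w,s)\cap A(w,z,s)\cap A(w,z',s)$, and independence at distance $R_0$ decouples the two ``outer'' events while positive correlations absorb the shared ``inner'' one, giving $\Pb{A(o,z,s)\cap A(o,z',s)}=O(1)\,f(\rho)\,f(r-\rho)^2$. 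Grouping pairs by branch radius (there are $\asymp e^{\rho}$ branch points at radius $\rho$, each feeding $\asymp e^{2(r-\rho)}$ pairs) turns $\Eb{|X_r|^2}$ into a geometric sum $\asymp e^{2(1-\alpha)r}\sum_{\rho}e^{(\alpha-1)\rho}$, which converges \emph{precisely because} $\alpha<1$. Paley--Zygmund then yields $\Pb{X_r\ne\emptyset}\ge\delta>0$ uniformly in $r$. Since the events ``$\gZ$ admits a covered ray out to radius $r$'' are decreasing in $r$, their intersection has probability at least $\delta$, and compactness of the space of directions at $o$ produces, on this event, an honest half-line through $o$ contained in $\gZ$ (the passage from the fattened $A$/$Q$ events to a genuine geodesic being handled by the constant $R'$ as in the proof of Theorem~\ref{t.gen}). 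Taking $o=x$ gives part~(ii).

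For part~(i) I would exploit that the covered rays from $o$ carry a \emph{branching} structure: the same growth $e^{(1-\alpha)r}$ makes the covered subtree infinite with at least two ends on a positive-probability event, and a bi-infinite covered path determines, through its two ideal endpoints, a genuine hyperbolic line in $\gZ$ --- the identical mechanism as in Theorem~\ref{t.gen}. This gives a line with positive probability; to promote it to an almost sure statement I would argue that ``$\gZ$ contains a line'' coincides up to null sets with a tail event (a sufficiently rich family of covered lines being present that one may avoid any fixed ball) and then invoke triviality of the tail $\sigma$-field, which follows from independence at distance $R_0$. For part~(iii), fix $x\in\p\H^2$ and repeat the first/second moment argument for geodesics \emph{converging to} $x$, parameterized transversally along a horocycle based at $x$; here neighboring rays coalesce as they approach $x$ rather than diverge, so the branch-radius bookkeeping is simply mirrored, and one again obtains positive probability of a covered ray toward $x$. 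The parabolic subgroup fixing $x$ preserves the law of $\gZ$ and, by independence at distance $R_0$, acts mixingly, hence ergodically; since the event in~(iii) is invariant under this subgroup, positive probability forces probability one.

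The step I expect to be the main obstacle is the second moment estimate: correctly reading off the branch radius $\rho$ from the exponential divergence of geodesics, and combining independence at distance $R_0$ with positive correlations so that the joint probability factorizes as $O(1)\,f(\rho)\,f(r-\rho)^2$, with the resulting geometric sum convergent exactly at the threshold $\alpha<1$. A secondary difficulty is the passage from the component-based fattened events $A(\cdot,\cdot,s)$ and $Q(\cdot,\cdot,s)$ to an honest geodesic lying in $\gZ$, together with the $0$--$1$ upgrade for part~(i), for which the parabolic ergodicity argument used in~(iii) is cleaner than a direct tail-field argument.
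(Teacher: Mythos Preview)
Your second-moment scheme for~(ii) is essentially the paper's, reparameterized: the paper works with the \emph{length} $y_r$ of the set of directions $x$ on a fixed arc $I\subset\partial B(o,1)$ such that the segment $L_r(x)\subset\gZ$, rather than a discrete net on $\partial B(o,r)$, and bounds the pair probability directly by $f(r)\,f\bigl(r+\log d(x,x')+O(1)\bigr)$ (one full ray, plus the tail of the other beyond the point where the two rays have separated to distance $R_0$). Your branch-radius sum $\sum_\rho e^{(\alpha-1)\rho}$ is the discrete form of the paper's integral $\int_I\int_I d(x,x')^{-\alpha}\,dx\,dx'$ under $\rho\approx -\log d(x,x')$; both converge precisely when $\alpha<1$. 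Two small points: your three-factor bound $f(\rho)\,f(r-\rho)^2$ cannot be obtained by ``positive correlations absorbing the inner event'' (FKG goes the wrong way for an upper bound), though the two-factor bound $O(1)\,f(r)\,f(r-\rho)$ via independence alone gives the same order; and by using the honest event $\{L_r(x)\subset\gZ\}$ as the paper does, you avoid the $A$/$Q$ detour entirely --- compactness of $\bigcap_r Y_r$ in the closed arc $I$ produces the ray directly, with no appeal to any $R'$.

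There is, however, a genuine gap in your part~(i). Two covered rays from $o$ do \emph{not} determine a covered line ``by the identical mechanism as in Theorem~\ref{t.gen}'': that theorem relies on an embedded tree whose bi-infinite paths stay within $R'$ of the corresponding hyperbolic line, and no such structure is available here. If the ideal endpoints $\tilde x',\tilde x''$ of your two rays subtend an angle at $o$ far from $\pi$, the line through $\tilde x'$ and $\tilde x''$ can lie far from either ray and need not meet $\gZ$ at all. The paper repairs this by running the second moment instead on the \emph{thickened} set $Y'_r:=\{x\in I:[z,z_r(x)]\subset\gZ\text{ for all }z\in B(o,s)\}$, with $s=1/(2c)$ and $\Eb{y'_r}\ge\Eb{y_r}/2$ by Lemma~\ref{lemma a}. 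If $x\in Y'_\infty:=\bigcap_r Y'_r$, then \emph{every} half-line from $B(o,s)$ to $\tilde x$ lies in $\gZ$; one then uses invariance and positive correlations to force, with positive probability, points of $Y'_\infty$ near \emph{both} endpoints of the half-circle $I$, so that the line through their ideal endpoints actually meets $B(o,s)$ and is therefore contained in $\gZ$. Your $Q$-events could be made to play the role of $Y'_r$, but the reference to Theorem~\ref{t.gen} is a red herring, and the step ``two ends $\Rightarrow$ line'' needs this thickening (plus control on the angle) to go through.
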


\begin{proof}
  We first prove (ii). Fix some point $o\in \H^2$. Let $A$ denote a
  closed half-plane with $o\in\p A$, and let $I:= A\cap \p B(o,1)$.
  For $r>1$ and $x\in \p B(o,1)$, let $L_r(x)$ denote the line segment
  which contains $x$, has length $r$ and has $o$ as an endpoint.  Set
  $Y_r:=\{x\in I: L_r(x)\subset\gZ\}$, and let $y_r$ denote the length
  of $Y_r$.  Then we have
$$
\Eb{y_r}=\length(I)\,f(r)\,.
$$
The second moment is given by
$$
\Eb{y_r^2} = \int_I\int_I \Pb{x,x'\in Y_r}\,dx\,dx'\,.
$$
Now note that if $r_2>r_1>0$, then the distance from
$L_{r_2}(x')\setminus L_{r_1}(x')$ to $L_{r_2}(x)$ is at least
$d(x,x')\,e^{r_1}/O(1)$.  Consequently, by
independence on sets at distance larger than $R_0$, we have
 $$
\Pb{x,x'\in Y_r} \le f(r)\,f\bigl(r+ \log d(x,x') +O(1)\bigr).
$$
Now applying the above and~\eqref{e.b} gives
\begin{multline*}
\frac{\Eb{y_r^2}}{\Es{y_r}^2}
\le
O(1)\,\int_I\int_I \exp\bigl(-\alpha\,\log d(x,x')\bigr)\,dx\,dx'
\\
=
O(1)\,\int_I\int_I d(x,x')^{-\alpha} \,dx\,dx'
=O(1)\,,
\end{multline*}
since $\alpha<1$.
Therefore, the Paley-Zygmund inequality implies  that
$$
 \inf_{r>1} \Pb{y_r>0} >0\,.
$$
Since $y_r$ is monotone non-increasing, it follows that
$$
\Pb{\forall_{r>1}\;\;y_r>0}>0\,.
$$
By compactness, on the event that $y_r>0$ for all $r>1$ we
have
$\bigcap_{r>1} Y_r\ne \emptyset$.
If $x\in \bigcap Y_r$, then the half-line with endpoint
$o$ passing through $x$ is contained in $\gZ\cap A$.
This proves (ii).

We now prove (i).
Fix $s=1/(2\,c)$, where $c$ is given by Lemma~\ref{lemma a}.
For $x\in \p B(o,1)$ let $z_r(x)$ denote the endpoint of $L_r(x)$ that
is different from $o$ and let
$Y'_r$ be the set of points $x\in I$ such that
$[z,z_r(x)]\subset \gZ$ holds for every $z\in B(o,s)$.
Let $y'_r$ denote the length of $Y'_r$.
Then $Y'_r\subset Y_r$ and therefore $y'_r\le y_r$.
By the choice of $s$, we have $\Eb{y'_r}\ge \Eb{y_r}/2$.
On the other hand, $\Eb{(y'_r)^2}\le\Eb{y_r^2}=O(1)\,\Eb{y_r}^2$.
As above, this implies that with positive probability
$Y'_\infty:=\bigcap_{r>1}Y'_r\ne\emptyset$.
Suppose that $x\in Y'_\infty$.
Let $\tilde x$ denote the endpoint on the ideal
boundary $\p\H^2$ of the half-line starting at $o$ and
passing through $x$. Then for every $z\in B(o,s)$ the
half-line $[z,\tilde x)$ is contained in $\gZ$.
By invariance and positive correlations,
for every $\eps>0$ there is positive
probability that $Y'_\infty$ is within distance $\eps$ from
each of the two points in $\p A\cap I$.
If $x'$ and $x''$ are two points in $Y'_\infty$ that are
sufficiently close to the two points in $\p A\cap I$,
then the hyperbolic line joining the two endpoints
at infinity of the corresponding half-lines through $o$
intersects $B(o,s)$. In such a case,  this line will
be contained in $\gZ$. Thus, we see that for every line $L$
(in this case $\p A$) for every point $o\in L$
and for every $\eps>0$, there
is positive probability that $\gZ$ contains a line passing
within distance $\eps$ of the two points in $\p B(o,1)\cap L$.
Now (i) follows by invariance and by independence at a distance.

The proof of (iii) is similar to the above, and will be omitted.
\qed
\end{proof}
\medskip

\begin{remark}
Let $o\in\H^2$.
Let $Y$ denote the set of points $z$ in in the ideal boundary
$\p \H^2$ such that the half-line $[o,z)$ is contained in $\gZ$.
 It can be concluded from the first and second
moments computed in the proof of Lemma~\ref{l.d} and a standard
Frostman measure argument that the essential supremum of the
 Hausdorff dimension of $Y$ is given by
$$
\|\dim_H(Y)\|_\infty=1-\alpha\,.
$$
It would probably not be too hard to show that
$\dim_H(Y)= 1-\alpha$ a.s.\ on the event that $Y\ne\emptyset$.

A modification of the above arguments shows that  there is
positive probability that $\gZ$ contains a line through
$o$ if and only if $\alpha<1/2$. In case $\alpha<1/2$,
the essential supremum of the Hausdorff dimension of the set of
lines in $\gZ$ through
$o$ is $1-2\,\alpha$.

It should be possible to show that the Hausdorff dimension of the
union of the lines in $\gZ$ is a.s.\ $3-2\,\alpha$ when
$\alpha\in [1/2,1)$.
\end{remark}

\section{Boolean occupied and vacant percolation}

Recall the definition of $\OC$ and $\VC$.
First, we show that $\OC$ and $\VC$ are well-behaved.

\begin{proposition}\label{p.well}
Fix a compact interval $I\subset(0,\infty)$.
Then there is some $\Lambda=\Lambda(I)>0$ such that if $\lambda,R\in I$,
then $\OC$ and $\VC$ are $\Lambda$-well behaved.
\end{proposition}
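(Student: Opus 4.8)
The plan is to verify, for both $\OC$ and $\VC$, each of the six conditions in Definition~\ref{d.well}, taking care that every bound is uniform over $(\lambda,R)\in I\times I$; a single $\Lambda=\Lambda(I)>0$ lying below all of these uniform bounds will then witness $\Lambda$-well-behavedness. Three of the conditions I expect to be immediate. Invariance (condition~1) holds because the intensity measure $\lambda\,dz$ (with $dz$ hyperbolic area) is isometry invariant, and $\OC$ and $\VC$ are constructed equivariantly from the Poisson process $X$. For independence at distance (condition~3) I would set $R_0:=2\sup I+1$: both $\OC\cap A$ and $\VC\cap A$ are measurable with respect to the points of $X$ lying in the $R$-neighborhood of $A$, so whenever $\dist(A,A')\ge R_0>2R$ these neighborhoods are disjoint and the required independence follows from the independence of a Poisson process over disjoint regions; note that $R_0$ is bounded uniformly over $I$. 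Positive correlations (condition~2) is the Harris--FKG inequality for Poisson processes: increasing functionals of $\OC$ are increasing in $X$, while increasing functionals of $\VC$ are decreasing in $X$, and the FKG inequality applies to pairs of increasing as well as to pairs of decreasing functionals.

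For the boundary length (condition~5) I would use $\partial\OC=\partial\VC\subseteq\bigcup_{x\in X}\partial B(x,R)$, so that by the Campbell formula
\begin{multline*}
\Eb{\length(B\cap\partial\gZ)}\le\Eb{\sum_{x\in X}\length\bigl(B\cap\partial B(x,R)\bigr)}\\
=\lambda\int_{\H^2}\length\bigl(B\cap\partial B(z,R)\bigr)\,dz\,.
\end{multline*}
The integrand vanishes unless $z$ lies within distance $R+1$ of the center of $B$, where it is at most $2\pi\sinh R$, so the integral---and hence $\ell$---is bounded uniformly over $I$. For $p_0$ (condition~6), in the occupied case I would fix finitely many centers $z_1,\dots,z_{N_0}$, with $N_0=N_0(\inf I)$, such that $\bigcup_i B(z_i,\tfrac12\inf I)\supseteq B$; if each $B(z_i,\tfrac12\inf I)$ contains a point of $X$ then $B\subseteq\OC$ (since $R\ge\inf I$), and FKG gives $\Pb{B\subseteq\OC}\ge\bigl(1-e^{-\lambda\,\area(B(z_1,\inf I/2))}\bigr)^{N_0}$, bounded below uniformly over $I$. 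In the vacant case, if $X$ has no point in $\overline{B(o,R+1)}$ (with $o$ the center of $B$) then $\OC\cap B=\emptyset$ and $B\subseteq\VC$, so $\Pb{B\subseteq\VC}\ge e^{-\lambda\,\area(B(o,R+1))}$, again bounded below uniformly over $I$.

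The main obstacle is condition~4, the uniform bound on the expected number $m$ of components of $B\setminus\gZ$. Let $N$ be the number of points of $X$ in $B(o,R+1)$, where $o$ is the center of $B$; then $N$ is Poisson with mean $\lambda\,\area\bigl(B(o,R+1)\bigr)$, which is bounded over $I$, so all moments of $N$ are bounded uniformly over $I$. Only the circles $\partial B(x,R)$ with $x\in B(o,R+1)$ can meet $B$, and together with $\partial B$ they form an arrangement of at most $N+1$ circles inside $B$. Any two circles meet in at most two points, so the arrangement has $O(N^2)$ vertices and $O(N^2)$ edges, whence by Euler's formula it has $O(N^2)$ faces. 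Since $\partial\gZ$ is a union of arcs of these circles, every connected component of $B\setminus\gZ$ (for $\gZ=\OC$ and for $\gZ=\VC$) is a union of faces of the arrangement, so the number of components is at most the number of faces. Therefore $m\le C\,\Eb{N^2}$ for a universal constant $C$, which is bounded uniformly over $I$. This geometric--combinatorial counting is the only step requiring real care; the rest is bookkeeping.

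Finally I would combine the estimates: taking $\Lambda=\Lambda(I)$ to be a positive number strictly smaller than the uniform lower bounds obtained above for $p_0$, for $m^{-1}$, for $\ell^{-1}$ and for $R_0^{-1}$---in both the occupied and the vacant case---shows that, for every $\lambda,R\in I$, both $\OC$ and $\VC$ are $\Lambda$-well behaved, as claimed.
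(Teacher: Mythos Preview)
Your proof is correct and follows essentially the same route as the paper, which is quite terse: it cites FKG for positive correlations, leaves conditions 1, 3, 5, 6 to the reader, and only argues for condition~4. The one noteworthy difference is in that argument. For $\gZ=\VC$ the paper observes directly that each component of $B\setminus\VC$ meets at least one ball $\overline{B(x,R)}$ with $x\in X$, so the number of components is at most $N$ and $m\le\Eb N$, a first-moment bound; for $\gZ=\OC$ it observes that each component of $B\cap\VC$, except possibly one, carries on its boundary an intersection point of two of the circles $\partial B(x,R)$, so the number of components is at most $1+2\binom{N}{2}$ and $m$ is bounded by the second moment of $N$. Your unified Euler-formula argument is a perfectly good alternative: it handles both cases at once and is arguably more robust (no need to think about which component might fail to carry a crossing point), at the cost of giving only the $O(N^2)$ bound in the vacant case where the paper gets $O(N)$. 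Either way the uniformity over $I$ is immediate, and the conclusion is the same.
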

\begin{proof}
It is well known that $\OC$ and $\VC$ satisfy positive correlations.
For ${\VC}$, $m$ is bounded by the expected number of
points in $X$ that fall in the $R$-neighborhood of $B$.
Observe that each connected component of $\VC\cap B$,
with the possible exception of one, has on its boundary
an intersection point of two circles of radius $R$ centered at points
in $X$. Since the second moment of the number of points in $X$ that
fall inside the $R$-neighborhood of $B$ is finite, it follows
that $m$ is also bounded for $\OC$.
The remaining conditions are easily verified and left to the reader.
\qed
\end{proof}
\medskip

We are now ready to prove one of our main theorems.

\proofof{Theorem~\ref{mainthm}}
We start by considering $\OC$.
Fix some $R\in(0,\infty)$. If we let $\lambda\nearrow \infty$, then
$f(1)\nearrow 1$ and by~\eqref{e.b} $\alpha\searrow 0$.
Thus, Lemma~\ref{l.d} implies that $\oc<\infty$.
(We could alternatively prove this from Theorem~\ref{t.gen}.)
It is also clear that $\oc>0$, since for $\lambda$ sufficiently
small a.s.\ $\OC$ has no unbounded connected component.

Since the constant $c$ in Lemma~\ref{l.b} depends only on $\Lambda$,
that lemma implies that $\alpha$ is continuous in
$(\lambda,R)\in(0,\infty)^2$.  In particular, Lemmas~\ref{l.c}
and~\ref{l.d} show that when $\lambda=\oc(R)$, we have $\alpha=1$ and
that there are a.s.\ no half-lines in $\OC$.  Also, we
get~\eqref{e.expo} from~\eqref{e.b}. Finally, it follows from
Lemma~\ref{l.c} and Lemma~\ref{l.d} (ii) that $\oc=\baroc$. The
proof for $\VC$ is similar.  \qed \medskip

Next, we calculate $\alpha$ for $\OC$ and ${\VC}$.

\begin{lemma}\label{l.alphaVC}
The value of $\alpha$ for line percolation in $\VC$ is given by
$$
\alpha=2\,\lambda\sinh R\,.
$$
\end{lemma}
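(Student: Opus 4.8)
The plan is to compute $f(r)$ exactly, exploiting the fact that for the Poisson Boolean model the event $\{[x,y]\subset\VC\}$ is just a void event for the underlying point process $X$. Fix $x,y\in\H^2$ with $d(x,y)=r$. A point $p\in\H^2$ lies in $\VC$ unless it is interior to $\OC$, i.e.\ unless some point of $X$ is at distance strictly less than $R$ from $p$. Hence, up to a null event, $[x,y]\subset\VC$ if and only if $X$ has no point in the open tube
$$
T_R:=\{p\in\H^2: d(p,[x,y])<R\}\,,
$$
and since $X$ is Poisson of intensity $\lambda$, the void probability gives
$$
f(r)=\Pb{X\cap T_R=\emptyset}=e^{-\lambda\,\area(T_R)}\,.
$$

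First I would justify this equivalence. If no point of $X$ is within distance $<R$ of the segment, then every segment point has distance $\ge R$ to all of $X$, hence is a limit of points of $\H^2\setminus\OC$ and so lies in $\VC$; conversely, a segment point within distance $<R$ of some $x_i\in X$ lies in the open ball $B(x_i,R)\subset\operatorname{int}(\OC)$ and is therefore excluded from $\VC$. The borderline case $d(p,x_i)=R$ occurs with probability zero and may be ignored.

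The geometric heart of the argument is the computation of $\area(T_R)$. I would use Fermi coordinates $(t,\rho)$ along the geodesic carrying $[x,y]$, in which the metric of $\H^2$ is $d\rho^2+\cosh^2\rho\,dt^2$ and the area element is $\cosh\rho\,d\rho\,dt$. The tube $T_R$ partitions into a \emph{body} — the points whose nearest point on the segment is interior — and two \emph{caps} at the endpoints. The body is $\{(t,\rho):0\le t\le r,\ -R<\rho<R\}$, of area
$$
\int_0^r\!\!\int_{-R}^{R}\cosh\rho\,d\rho\,dt=2\,r\sinh R\,,
$$
while the two caps are half-disks of radius $R$ whose areas sum to that of a full hyperbolic disk of radius $R$, namely $2\pi(\cosh R-1)$. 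Thus $\area(T_R)=2\,r\sinh R+2\pi(\cosh R-1)$ and
$$
f(r)=e^{-2\pi\lambda(\cosh R-1)}\,e^{-2\lambda\sinh R\,r}\,.
$$
Reading off the exponential rate (the cap term being a constant prefactor independent of $r$) yields $\alpha=\lim_{r\to\infty}-\log f(r)/r=2\lambda\sinh R$. This is consistent with Lemma~\ref{l.b}: the right inequality $f(r)\le e^{-\alpha r}$ holds since the prefactor is $<1$, and the left inequality holds with $c=e^{-2\pi\lambda(\cosh R-1)}$.

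The only genuinely delicate point is the area element $\cosh\rho\,d\rho\,dt$, that is, the fact that the length of the equidistant curve (hypercycle) at distance $\rho$ from the geodesic grows like $\cosh\rho$; this exponential growth is precisely the ``exponential divergence of geodesics'' highlighted in the introduction, and it is what produces the linear-in-$r$ coefficient $2\sinh R$ governing $\alpha$. Everything else is bookkeeping: the caps contribute only a constant prefactor and are irrelevant to $\alpha$, and the reduction of $\{[x,y]\subset\VC\}$ to a Poisson void event is routine.
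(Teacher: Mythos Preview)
Your proof is correct and follows essentially the same approach as the paper's: reduce $\{[x,y]\subset\VC\}$ to a Poisson void event and compute the area of the $R$-tube as a body of area $2r\sinh R$ plus two end-caps whose area is independent of $r$. The only cosmetic difference is that you carry out the body-area computation in Fermi coordinates (metric $d\rho^2+\cosh^2\rho\,dt^2$), whereas the paper does the equivalent calculation in polar-type coordinates in the upper half-plane model; both arrive at $2r\sinh R$ for the same reason.
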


\begin{proof}
  Consider a line $\gamma:\R\to\H^2$, parameterized by arclength, and
  let $r>0$.  A.s.\, the interval $\gamma[0,r]$ is contained in $\VC$
  if and only if the $R$-neighborhood of the interval does not contain
  any points of $X$.  Let $N$ denote this neighborhood, and let $A$
  denote its area.  Then $f(r)=e^{-\lambda A}$.  For each point
  $z\in\H^2$, let $t_z$ denote the $t$ minimizing the distance from
  $z$ to $\gamma(t)$.  Then $N=N_0\cup N_1 \cup N_2$, where
  $N_0:=\{z\in\H^2: d(z,\gamma(t_z))<R, t_z\in[0,r]\}$, $N_1:=\{z\in
  B(\gamma(0),R) :t_z<0\}$ and $N_2:=\{z\in B(\gamma(r),R):t_z>r\}$.
  Observe that $N_1$ and $N_2$ are two half-disks of radius $R$, so
  that their areas are independent of $r$.  We can conveniently
  calculate the area of $N_0$ explicitly in the upper half-plane model
  for $\H^2$, for which the hyperbolic length element is given by
  $|ds|/y$, where $|ds|$ is the Euclidean length element. We choose
  $\gamma(t)=(0,e^t)$.  Recall that the intersection of the upper
  half-plane with the Euclidean circles orthogonal to the real line
  are lines in this model.  It is easy to see that for
  $z=(\rho\,\cos\theta,\rho\,\sin\theta)$, we have
  $\gamma(t_z)=(0,\rho)$.  Moreover, the distance from $z$ to $\gamma$
  is
$$
\Bigl|\int_{\theta}^{\pi/2} \frac{\rho\,d\psi}{\rho\sin\psi}\Bigr|=
\bigl|\log \tan(\theta/2)\bigr|.
$$
Thus, if we choose $\theta\in(0,\pi/2)$ such that $\tan(\theta/2)=e^{-R}$,
then $N_0$ consists of the set
$\bigl\{(\rho\,\cos\psi,\rho\,\sin\psi):
\rho\in[1,e^r],\psi\in(\theta,\pi-\theta)\bigr\}$.
Thus,
$$
\area(N_0)=
\int_\theta^{\pi-\theta}\int_1^{e^r}\frac{\rho\,d\rho\,d\psi}{\rho^2\sin^2\psi}
=2\,r\cot \theta=
r (\cot{\scriptstyle\frac\theta 2}-\tan{\scriptstyle\frac\theta2})
=
2\,r\sinh R\,.
$$
The result follows.
\qed
\end{proof}
\medskip

An immediate consequence of Lemma~\ref{l.alphaVC} is that
$1/(2\sinh(R))$ is the critical intensity for line percolation in
$\VC$.

\begin{remark} Let $\lambda_{\mathrm{c}}(R)$ be the infimum of the set of
  intensities $\lambda\ge 0$ such that $\OC$ contains unbounded
  components a.s.\ Proposition 4.7 in \cite{T} says that
  $\lambda_{\mathrm{c}}(R)/e^{2R}=O(1)$ as $R\rightarrow \infty$, which means
  $\vc(R)>\lambda_{\mathrm{c}}(R)$ for $R$ large. Theorem 4.1 in \cite{T},
  therefore implies that for $R$ large enough, there are intensities for
  which there are lines in $\VC$, but also infinitely many unbounded
  components in both $\VC$ and $\OC$. On the other hand, since
  $\lambda_{\mathrm{c}}(R)\ge 1/(2\pi(\cosh(2R)-1))$, it follows that for $R$
  small enough, we have $\lambda_{\mathrm{c}}(R)>\lambda_{\gv}(R)$. So for $R$
  small enough, there are no intensities for which lines in the vacant
  region coexist with unbounded components in the covered region.
\end{remark}

\begin{lemma}\label{l.alphaOC}
In the setting of line percolation in $\OC$,  $\alpha$ is the unique solution
of the equation
\begin{equation}
\label{e.aeq}
1=\int_0^{2R} e^{\alpha t}\, H_{R,\lambda}'(t)
\,dt\,,
\end{equation}
where
$$
H_{R,\lambda}(t) := -\exp\left(-4\,\lambda\int_0^{t/2}
\sinh\left(\cosh^{-1}\left({\frac{\cosh R}{\cosh s}}\right)\right)\,ds
\right).
$$
\end{lemma}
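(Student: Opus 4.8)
The plan is to reduce the event $\{\gamma[0,r]\subset\OC\}$ to a one-dimensional coverage problem on the geodesic $\gamma$ and to read off $\alpha$ as the exponential decay rate of the corresponding coverage probability $f(r)$. I would work in Fermi coordinates $(u,v)$ based on $\gamma$, in which $u$ is arclength along $\gamma$, $v$ is signed distance to $\gamma$, and the area element is $\cosh v\,du\,dv$. A ball $\overline{B(x,R)}$ whose centre lies at $(u,v)$ meets $\gamma$ in the interval $[u-h(v),\,u+h(v)]$, where the hyperbolic Pythagorean relation $\cosh R=\cosh v\,\cosh h(v)$ gives $h(v)=\cosh^{-1}\!\big(\cosh R/\cosh v\big)$ for $|v|\le R$ (and the ball misses $\gamma$ for $|v|>R$). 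Pushing the planar Poisson process forward under $x\mapsto(\text{centre},\text{half-width})$ yields a translation-invariant Poisson process of subintervals of $\gamma$, and $\gamma[0,r]\subset\OC$ holds exactly when these intervals cover $[0,r]$. The first quantity I would compute is the rate $N(w)$, per unit length, of intervals of length exceeding $w$: a half-width exceeds $w/2$ iff $|v|<h(w/2)$, so integrating the intensity $\lambda\cosh v$ over $|v|<h(w/2)$ gives $N(w)=2\lambda\sinh\!\big(h(w/2)\big)$. Comparing logarithmic derivatives then shows that $G(w):=\exp\!\big(-\int_0^w N\big)$ equals $-H_{R,\lambda}(w)$; this identity is the bridge to \eqref{e.aeq}.

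Next I would introduce the forward-reach process $W(t)=\max\{0,\ \sup\{b-t:[a,b]\text{ a shadow interval},\ a\le t\le b\}\}$. As the scan point $t$ advances, $W$ decreases at unit speed and jumps $W\mapsto\max(W,2h)$ each time the scan meets the left endpoint of a fresh interval of half-width $h$. Since the left endpoints form a Poisson process marked by the half-width, with $N(w)=\nu(\{2h>w\})$ for the mark intensity $\nu$, the process $W$ is a piecewise-deterministic Markov process on $[0,2R]$, killed on reaching $0$. The structural point I would check carefully is that $\gamma[0,r]$ is covered precisely when $W$ stays positive throughout $[0,r]$: a new interval joins the covered cluster exactly at its left endpoint, provided no gap has yet formed. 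Hence $f(r)$ and $\Pb{W>0 \text{ on } [0,r]}$ share the same exponential rate, which by Lemma~\ref{l.b} is $\alpha$; it remains to compute the principal decay rate of the killed process $W$.

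To identify that rate I would seek $\alpha$ and a positive $\psi$ on $(0,2R]$ solving $\mathcal L\psi=-\alpha\psi$ with $\psi(0)=0$, where $\mathcal L\psi(w)=-\psi'(w)+\int_0^R\big[\psi(\max(w,2h))-\psi(w)\big]\,\nu(dh)$; then $e^{\alpha t}\psi(W(t))$ is a martingale up to the killing time, and optional stopping pins the rate to $\alpha$. Splitting the integral at $2h=w$ turns the eigenequation into $\psi'=(\alpha-N)\psi+\Phi$ with $\Phi(w)=\int_{w/2}^R\psi(2h)\,\nu(dh)$ and $\Phi'=N'\psi$; differentiating once eliminates $\Phi$ and gives the simple equation $\psi''=(\alpha-N)\psi'$. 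Together with $\psi(0)=0$ this integrates to $\psi(w)=\int_0^w e^{\alpha s}G(s)\,ds$, which is automatically positive. The one remaining constraint is $\Phi(2R)=0$ (forced since $\Phi(w)=\int_{w/2}^R\psi(2h)\nu(dh)$ vanishes at $w=2R$), that is $\psi'(2R)=\alpha\,\psi(2R)$, or $e^{2\alpha R}G(2R)=\alpha\int_0^{2R}e^{\alpha s}G(s)\,ds$. Using $G'=-NG$ and integrating by parts rewrites this as $\int_0^{2R}e^{\alpha t}N(t)G(t)\,dt=1$, and since $H_{R,\lambda}'=NG$ this is exactly \eqref{e.aeq}. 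Uniqueness follows because the left-hand side increases in $\alpha$ from $1-G(2R)<1$ at $\alpha=0$ to $\infty$.

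The hard part will be the spectral justification underlying the constructions above: showing that the survival rate of the killed Markov process $W$ is exactly the eigenvalue $\alpha$, i.e. that the positive solution found above is the Perron eigenfunction and that the exponential martingale $e^{\alpha t}\psi(W(t))$ controls $f(r)$ from both sides under optional stopping at the killing time. The second delicate point is the equivalence between coverage of $\gamma[0,r]$ and positivity of $W$, together with the Markov property of $W$ relative to the left-endpoint ordering of the intervals; once these are in place, the geometric computation of $N$ and the reduction to $\psi''=(\alpha-N)\psi'$ are routine.
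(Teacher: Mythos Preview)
Your reduction to a one-dimensional interval-coverage process on $\gamma$ via Fermi coordinates, and your identification of the tail rate $N(w)=2\lambda\sinh\bigl(\cosh^{-1}(\cosh R/\cosh(w/2))\bigr)$ together with $-H_{R,\lambda}=\exp\bigl(-\int_0^{\cdot}N\bigr)$, coincide with the paper's geometric computation. The difference is in how $\alpha$ is extracted. You follow the \emph{maximum} forward reach $W(t)$ as a killed piecewise-deterministic Markov process and locate $\alpha$ as the Perron eigenvalue of its generator, reducing to $\psi''=(\alpha-N)\psi'$ with the boundary constraint $\psi'(2R)=\alpha\,\psi(2R)$. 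The paper instead uses the \emph{minimum} forward reach $S:=\inf\{u_+(x):\gamma(0)\in B(x,R)\}$ as a renewal variable: conditioning on $\{S=s\}$ amounts to emptying $Q_s=B(\gamma(0),R)\setminus B(\gamma(s),R)$ and placing one point on $\partial B(\gamma(s),R)$, and since $Q_s$ is disjoint from the $R$-neighbourhood of $\gamma[s,\infty)$ the conditional law of $\OC\cap\gamma[s,r]$ is the unconditional one. This yields directly the renewal equation $f(r)=\int_0^{2R}f(r-s)\,G'(s)\,ds$ with $G=1+H_{R,\lambda}$, and a short sandwiching argument on this equation pins $\alpha$ as the unique root of $\int_0^{2R}e^{\alpha s}G'(s)\,ds=1$. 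The paper's route is therefore more elementary: it bypasses entirely the spectral justification you rightly flag as the hard part (that the eigenvalue governs the survival rate from both sides). Your PDMP approach is more dynamical and would adapt to models lacking a clean renewal structure, but here the insight worth noting is that choosing the minimum rather than the maximum right-endpoint gives an exact renewal for $f$ itself, because the conditioning only touches a region irrelevant to the future of the coverage.
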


\begin{proof}
  Consider a line $\gamma:\R\to\H^2$, parameterized by arclength.
  Recall that $X$ is the underlying Poisson process.  We now derive an
  integral equation satisfied by
$$
f(r) = \Pb{\gamma[0,r]\subset\OC}.
$$
For a point $x$ in the $R$-neighborhood of $\gamma$, let
$u_+(x):=\sup\{s:\gamma(s)\in B(x,R)\}$ and
$u_-(x):=\inf\{s:\gamma(s)\in B(x,R)\}$.  Let $X_0:=\{ x\in
X:u_-(x)<0<u_+(x)\}$.  This is the set of $x\in X$ such that
$\gamma(0)\in B(x,R)$.  Also set
$$
S:=\begin{cases} \inf \{u_+(x):x\in X_0\} & X_0\ne\emptyset,\\
  -\infty & X_0=\emptyset\,.
\end{cases}
$$
Assume that $r\ge 2\,R$.  A.s., if $S=-\infty$, then $\gamma[0,r]$ is
not contained in $\OC$. On the other hand, if we condition on $S=s$,
where $s\in(0,2R)$ is fixed, then $\gamma[0,s)\subset \OC$ and the
conditional distribution of $\gamma[s,r]\cap \OC$ is the same as the
unconditional distribution.  (Of course, $S=s$ has probability zero,
and so this conditioning should be understood as a limit.)  Therefore,
we get
\begin{equation}
\label{e.Scond}
\Pb{\gamma[0,r]\subset\OC\md S} = f(r-S),
\end{equation}
where, of course, $f(\infty)=0$.

Let $G(t):=\Pb{S\in(0,t)}$. Shortly, we will
show that $G(t)=H_{R,\lambda}(t)+1$. But presently, we just assume
that $G'(t)$ is continuous and derive~\eqref{e.aeq} with $G$ in place
of $H$.
Since the probability density for $S$ in $(0,2\,R)$ is given
by $G'(t)$, we get from~\eqref{e.Scond}
\begin{equation}
\label{e.rec}
f(r)=\int_0^{2R} f(r-s)\,G'(s)\,ds\,.
\end{equation}
Suppose that $\beta>0$ satisfies
\begin{equation}
\label{e.beta}
1=\int_0^{2R} e^{\beta s}\,G'(s)\,ds\,.
\end{equation}
Since $\int_0^{2R} G'(s)\,ds=\Pb{S>0}<1$,
continuity implies that there is some such $\beta$.
Suppose that there is some $r>0$ such that
$f(r)\le e^{-\beta r}\,f(2R)$, then
let $r_0$ be the infimum of all such $r$.
Clearly, $r_0\ge 2\,R$.
By the definition of $r_0$ and~\eqref{e.rec}, we get
$$
f(r_0) > \int_0^{2R} e^{-\beta (r_0-s)}\, f(2R)\,G'(s)\,ds
\overset {\eqref{e.beta}}=
e^{-\beta r_0}\,f(2\,R)
\,.
$$
Since $f(r)$ is continuous on $(0,\infty)$, this contradicts
the definition of $r_0$.
A similar contradiction is obtained if one assumes that there is some
$r>0$ satisfying $f(r)\ge e^{-\beta (r-2R)}$.
Hence $e^{-\beta r}\,f(2R)\le f(r)\le e^{-\beta(r-2R)}$,
which gives $\alpha=\beta$.

It remains to prove that $G(t)=H_{R,\lambda}(t)+1$.
Let $Q_t:= B\bigl(\gamma(0),R\bigr)\setminus B\bigl(\gamma(t),R\bigr)$.
Observe that
\begin{equation}
\label{e.Gt}
G(t)= \Pb{X\cap Q_t\ne\emptyset}=1-\Pb{X\cap Q_t=\emptyset}=1-e^{-\lambda\,\area(Q_t)}.
\end{equation}
Hence, we want to calculate $\area(Q_t)$.
For $z\in\H^2$ let $u(z)$ denote the $t\in \R$ that minimizes
$d\bigl(z,\gamma(t)\bigr)$, and let $\phi(t,y)$ denote the point in $\H^2$
satisfying $u(z)=t$ which is at distance $y$ to the left of $\gamma$ if
$y\ge 0$, or $-y$ to the right of $\gamma$ otherwise.
Observe that $\bigl\{z\in B\bigl(\gamma(0),R\bigr): u(z)<-t/2\bigr\}$
is isometric to (see Figure~\ref{f.ballsgamma})
$$
\bigl\{z\in B\bigl(\gamma(t),R\bigr): u(z)<t/2\bigr\}=
\bigl\{z\in B\bigl(\gamma(0),R\bigr): u(z)<t/2\bigr\}\setminus \overline{Q_t}\,.
$$
Therefore,
\begin{equation}
\label{e.areaequal}
\area(Q_t)=\area\bigl\{z\in B\bigl(\gamma(0),R\bigr): u(z)\in[-t/2,t/2]\bigr\}.
\end{equation}
By the hyperbolic Pythagorian theorem, we have
$$
\cosh d\bigl(\gamma(0),\phi(s,y)\bigr) = \cosh s\,\cosh y\,.
$$
Hence, the set on the right hand side of~\eqref{e.areaequal}
is
\begin{equation}
\label{e.newset}
\bigl\{\phi(s,y):s\in[-t/2,t/2], \cosh y\le\cosh R/\cosh s\bigr\}.
\end{equation}
At the end of the proof of Lemma~\ref{l.alphaVC}, we saw that the area
of a set of the form $\bigl\{\phi(s,y):s\in[0,r], |y|\le R\bigr\}$ is
$2\,r\,\sinh R$. Hence, the area of~\eqref{e.newset} (and also the area of
$Q_t$) is given by
$$
\int_{-t/2}^{t/2}2\,\sinh\bigl(\cosh^{-1}(\cosh R/\cosh s)\bigr)\,ds\,.
$$
The result follows by~\eqref{e.beta} and~\eqref{e.Gt}, since $\alpha=\beta$.
\qed
\end{proof}
\medskip

\begin{figure}
\SetLabels
\B(.6*.53)$\gamma(t)$\\
\B(.4*.53)$\gamma(0)$\\
\B(.9*.53)$\gamma$\\
\endSetLabels
\centerline{\epsfysize=1in%
\AffixLabels{%
\epsfbox{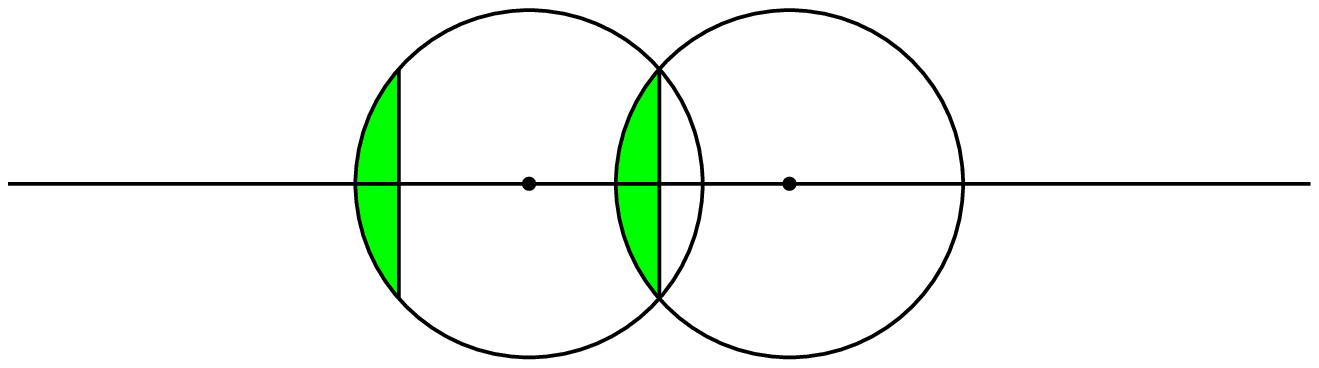}}
}
\begin{caption} {\label{f.ballsgamma}
Calculating the area of $Q_t$.
The set $Q_t$ is the left ball minus the right ball.
The area is calculated by first exchanging the left cap by its \lq\lq shift\rq\rq.}
\end{caption}
\end{figure}

\section{No planes in higher dimensions}

It is natural to ask for high dimensional variants.
Fix some $d\in\N$, $d>2$. Let $\lambda,R>0$.
Let $\OC:=\bigcup_{x\in X} B(x,R)$, where $X$ is
a Poisson point process of intensity $\lambda$ in $\H^d$.
Let $\VC$ be the closure of $\H^d\setminus \OC$.

\begin{proposition} For every $d\in\N\cap[3,\infty)$,
$\lambda,R>0$, a.s.\ there are no $2$-dimensional planes in $\H^d$
that are contained in $\OC$.
Similarly, there are no $2$-dimensional planes in $\H^d$
that are contained in $\VC$.
\end{proposition}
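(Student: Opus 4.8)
The plan is to show that a $2$-plane contained in $\OC$ (or $\VC$) is too rigid an object to survive in a $d$-dimensional hyperbolic space for $d\ge 3$, by exploiting the fact that geodesic planes diverge exponentially in \emph{two} independent transverse directions rather than one. The heuristic is that the analogue of $f(r)$ for a plane should be the probability that a geodesic disk of radius $r$ lies in $\OC$; because the area of such a disk grows like $e^{r}$ while its boundary also grows like $e^r$, any exponential decay of the one-dimensional covering probability forces super-exponential decay for the two-dimensional event, and this will be incompatible with the existence of a plane. So the first thing I would do is fix a totally geodesic $\H^2\subset\H^d$ and set $g(r):=\Pb{D_r\subset\OC}$, where $D_r$ is a geodesic disk of radius $r$ in that plane.

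\textbf{Reduction to a one-dimensional estimate.} Next I would relate $g(r)$ to the line percolation function $f$ already controlled by Lemma~\ref{l.b}. The key geometric input is that in $\H^d$ the $R$-neighborhood of a geodesic $2$-plane has infinite volume but, more importantly, that a plane contains \emph{many} geodesics whose mutual transverse separation grows exponentially. Concretely, inside $D_r$ one can find a family of $\sim e^{r}$ disjoint geodesic segments of length $\sim r$ whose pairwise $R$-neighborhoods become asymptotically disjoint (here the extra dimension is what makes the neighborhoods genuinely separate, unlike the planar case where neighborhoods of parallel geodesics overlap). Using independence at distance $R_0$ together with the bound $f(r)\le e^{-\alpha r}$ from~\eqref{e.b}, I would obtain an estimate of the form $g(r)\le \bigl(Ce^{-\alpha r}\bigr)^{N(r)}$ with $N(r)\to\infty$, giving super-exponential decay of $g$.

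\textbf{From fast decay to absence of planes.} With $g$ decaying faster than any exponential, I would run the same first-moment-and-locality argument used in Lemma~\ref{l.c}. Fix a basepoint $o$; for each integer $r$ cover a sphere of geodesic planes at ``scale'' $r$ by a net of size $\mathrm{poly}(e^r)$, let $X_r$ count net elements whose associated disk is (nearly) contained in $\OC$, and check that $\Eb{|X_r|}\to 0$ because $g(r)$ beats the polynomial-in-$e^r$ size of the net. Then Fatou plus a positive-correlations ``restart'' inequality as in~\eqref{e.Xr2} forces $\max\{r:X_r\ne\emptyset\}<\infty$ a.s., so no plane can pass near $o$; covering $\H^d$ by countably many balls finishes the argument. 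The same reasoning applies verbatim to $\VC$ since, by Proposition~\ref{p.well}, $\VC$ is also well-behaved with a finite exponential rate $\alpha$.

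\textbf{The main obstacle} will be the transverse-separation estimate that produces the factor $N(r)\to\infty$: one must verify honestly that within a geodesic disk of radius $r$ there really are order $e^r$ segments whose $R$-tubes eventually separate by more than $R_0$, and that this separation is a genuinely $d\ge 3$ phenomenon (in $\H^2$ the tubes of coplanar geodesics overlap, which is exactly why lines \emph{can} percolate there but planes cannot here). I expect the cleanest route is to foliate $D_r$ by geodesics orthogonal to a fixed diameter and use the hyperbolic law of cosines to quantify how fast two such geodesics pull apart in the extra dimension; once that exponential divergence is pinned down, the counting bound and the Borel--Cantelli-type conclusion are routine.
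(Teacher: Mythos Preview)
Your high-level strategy---show that the probability $g(r)$ of a fixed hyperbolic disk $D_r$ lying in $\gZ$ decays super-exponentially, then beat the polynomial-in-$e^r$ size of a net of candidate planes---is exactly the paper's approach. But the geometric mechanism you propose for the super-exponential decay is wrong, and this is the real gap. Your segments lie in a fixed totally geodesic plane $L\cong\H^2$, and since $L$ is totally geodesic, distances between points of $L$ measured in $\H^d$ equal the distances measured in $L$. Thus the $R$-tubes in $\H^d$ of two coplanar geodesics are disjoint if and only if the geodesics are at distance $>2R$ \emph{in the plane}; the ambient dimension plays no role whatsoever. Your remark that ``the extra dimension is what makes the neighborhoods genuinely separate'' and the plan to ``quantify how fast two such geodesics pull apart in the extra dimension'' are therefore based on a misconception: coplanar geodesics never leave the plane. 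The condition $d\ge 3$ enters only because it makes the Grassmannian of planes nontrivial; it does not help with the decay estimate.

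The paper obtains the decay much more simply. Since $\area(D_r)\asymp e^r$, one can place $N\asymp e^r$ points $p_1,\dots,p_N\in L\cap B(o,r)$ with pairwise distances larger than $R+3$. For each $p_i$ the event that $p_i$ lies within distance $1$ of $\gZ$ has probability at most some constant $q<1$ depending only on $(d,R,\lambda)$, and by independence of the Poisson process on disjoint regions these events are independent. Hence $\Pb{L\in Y_r}\le q^{N}\le\exp(-c\,e^r)$. No reference to $f$, $\alpha$, Lemma~\ref{l.b}, or Proposition~\ref{p.well} is needed (all of those are stated only for $\H^2$ and would have to be re-proved for $\H^d$ under your plan). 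Finally, because $\exp(-c\,e^r)$ already overwhelms any $\mathrm{poly}(e^r)$ net, one gets $\Eb{|X_r|}\to 0$ directly and hence $\Pb{X_r\ne\emptyset}\to 0$; the Fatou/restart machinery of~\eqref{e.Xr2}, which was needed in Lemma~\ref{l.c} only because there $\Eb{|X_r|}$ was merely bounded, is unnecessary here.
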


\begin{proof}
  Let $\ev Z$ be $\VC$ or $\OC$.  Fix some $o\in\H^d$, and let $r>0$
  be large. Let $Y_r$ be the set of planes $L$ intersecting the ball
  $B(o,2)$ such that $L\cap B(o,r)$ is contained in the
  $1$-neighborhood of $\ev Z$. If there is a plane $L$ intersecting
  $B(o,1)$ such that $L\cap B(o,r)\subset \ev Z$, then $Y_r$ contains
  the set of planes $L'$ such that the Hausdorff distance between
  $L\cap B(o,r)$ and $L'\cap B(o,r)$ is less than $1$. It therefore
  follows that if there is such an $L$, then the measure of $Y_r$
  (with respect to the invariant measure on the Grassmannian) is at
  least $\exp\bigl(-O(r)\bigr)$. However, if we fix a plane $L$ that
  intersects $B(o,2)$, then $\Pb{L\in Y_r} \le \exp(-c\,e^r)$ for some
  $c=c(d,R,\lambda)>0$, because there are order $e^r$ points in $L\cap
  B(o,r)$ such that the distance between any two is larger than $R+3$.
  This means that the expected measure of $Y_r$ is at most
  $\exp(-c\,e^r)$.  Consequently, the probability that there is some
  plane $L$ intersecting $B(o,1)$ such that $B(o,r)\cap L\subset \ev
  Z$ goes to zero as $r\to\infty$.  \qed
\end{proof}

\section{Connectivity of lines}\label{s.Grassmann}

In this section, we consider a somewhat different model
using a Poisson process on the Grassmannian $\mathbb G$ of lines in $\H^2$.
For this purpose, we first recall the form of an
isometry-invariant measure on $\mathbb G$.
Consider the upper half-plane model for $\H^2$.
Let $\hat\R=\R\cup\{\infty\}=\p\H^2$ denote the boundary
at infinity of $\H^2$.
To each unoriented line $L\subset\H^2$ we may associate
the pair of points of $L$ on the boundary at infinity
$\hat \R$. This defines a bijection between $\mathbb G$
and $$\mathbb M:=\bigl\{\{x,y\}: x,y\in\hat \R,x\ne y\bigr\}\,.$$
(Though we will not use this fact, $\mathbb M$ is an open
M\"obius band, or a punctured projective plane.)
In the following, we often identify $\mathbb M$ and $\mathbb G$
via this bijection, and will not always be careful to distinguish
between them.

The set $\mathbb M$ inherits a locally Euclidean metric coming from
the $2$ to $1$ projection from
$\hat\R\times\hat\R\setminus\text{diagonal}$.
Let $\Phi$ be the measure on $\mathbb M$ whose density at a point
$\{x,y\}\in\mathbb M$ such that $x,y\ne \infty$ with respect to the
Euclidean area measure is
$$d\Phi=\frac{dx\,dy}{(x-y)^2}\,,$$
and $\Phi\bigl(\bigl\{\{x,\infty\}:x\in\R\bigr\}\bigr)=0$.
An isometry $\psi:\H^2\to\H^2$ induces a map $\mathbb G\mapsto\mathbb G$.
In the upper half plane coordinate system,
each such $\psi$ is a transformation of the form $z\mapsto (a\,z+b)/(c\,z+d)$,
with $a,b,c,d\in\R$ and $a\,d-b\,c\ne 0$.
Moreover, $\psi$ extends to a self-homeomorphism of $\hat\R$, and therefore there is
an induced map from $\mathbb M$ to $\mathbb M$.
It is easy to verify using the integration change of variables
formula that $\psi$ preserves the measure $\Phi$.
Hence, $\Phi$ is an isometry-invariant measure on $\mathbb G$.

Let $Y$ be a Poisson point
process of intensity $\lambda$ on $\mathbb G$ with respect to $\Phi$.
With a slight abuse of
notation, we will also write $Y$ for the union of all lines in $Y$,
when viewed as a subset of $\H^2$.
Let $\gZ$ be the complement of $Y$. Observe that a.s.\ $Y$ is connected if
and only if $\gZ$ contains no lines.

\begin{figure}
\begin{center}
\epsfysize=2.2in%
\epsfbox{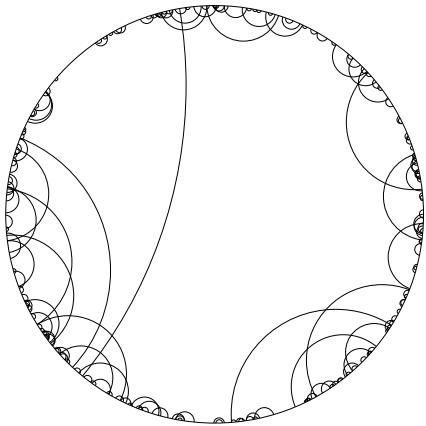}
\end{center}
\caption{\label{grass}A realization of a Poisson process on the Grassmannian of lines in $\H^2$ in the Poincar\'e disk model.}
\end{figure}

\begin{proposition}\label{g.lines}
  If $\lambda\ge 1$ then $\gZ$ contains no lines a.s.\ If $\lambda<1$
  then $\gZ$ contains lines a.s.
\end{proposition}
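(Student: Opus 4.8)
The plan is to translate everything into the geometry of ideal endpoints: two distinct geodesics of $\H^2$ meet if and only if their endpoint pairs on $\partial\H^2$ interleave, so a line $\ell$ lies in $\gZ$ precisely when no line of $Y$ crosses $\ell$. The first step I would carry out is the integral–geometric identity that the $\Phi$–measure of the set of lines meeting a fixed geodesic segment equals its hyperbolic length; this is the same half–plane computation used at the end of the proof of Lemma~\ref{l.alphaVC} (place the segment on the imaginary axis and integrate $d\Phi=dx\,dy/(x-y)^2$). Consequently the lines of $Y$ crossing a given segment form a Poisson process of intensity $\lambda$ in arclength, and therefore
$$
f(r)=\Pb{\text{a segment of length }r\subset\gZ}=e^{-\lambda r},
$$
so in the notation of Lemma~\ref{l.b} we have $\alpha=\lambda$, which already explains the threshold at $\lambda=1$. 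I would then reprove the two directions by following Lemmas~\ref{l.c} and~\ref{l.d}. These cannot be cited verbatim, since a line is a global object and the present $\gZ$ does \emph{not} satisfy independence at distance; the replacement for that axiom is the only independence the model offers, namely that for any compact $K$ the lines of $Y$ meeting $K$ and those missing $K$ are independent Poisson processes.

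For $\lambda\ge 1$ (no lines) I would follow Lemma~\ref{l.c}. Fix $o\in\H^2$, set $s=1/2$, and take an $s$–net $V(r)$ on $\partial B(o,r)$, so $|V(r)|=O(e^r)$; let $X_r$ be the set of $z\in V(r)$ admitting a segment from $B(o,s)$ to $B(z,s)$ contained in $\gZ$. The key bound is that such a free segment forces that \emph{no} line of $Y$ separates $B(o,s)$ from $B(z,s)$; since the $\Phi$–measure of lines separating these balls is at least $d(o,z)-O(1)=r-O(1)$, we get $\Pb{z\in X_r}\le O(1)\,f(r)$ and hence $\Eb{|X_r|}=O(e^{(1-\lambda)r})=O(1)$. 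By Fatou, $\liminf_r|X_r|<\infty$ a.s. For $\lambda>1$ this expectation tends to $0$ and we are done. The delicate case is $\lambda=1$, where I would run the restart of Lemma~\ref{l.c}: conditionally on the lines meeting $B(o,r)$ (which determine $X_r$), each of the finitely many candidates can be blocked by a line crossing its extension in the annulus $B(o,r')\setminus B(o,r)$ while avoiding $B(o,r)$, and such lines, drawn from disjoint regions of $\mathbb G$, are independent of the conditioning and of one another. This gives $\Pb{X_{r'}=\emptyset\mid\text{lines meeting }B(o,r)}\ge p^{|X_r|}$, whence $\inf_r|X_r|=0$ a.s.; so no half–line meets $B(o,s)$, and covering $\H^2$ by countably many such balls rules out all half–lines, hence all lines.

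For $\lambda<1$ (lines exist) I would follow Lemma~\ref{l.d}. With $I\subset\partial B(o,1)$ an arc, $L_r(x)$ the length–$r$ segment from $o$ through $x$, and $y_r=\length\{x\in I:L_r(x)\subset\gZ\}$, we have $\Eb{y_r}=\length(I)\,e^{-\lambda r}$. The crux is the two–point function: writing $z=z_r(x),z'=z_r(x')$, the two legs are simultaneously free iff no line crosses $L_r(x)\cup L_r(x')$, so $\Pb{x,x'\in Y_r}=\exp(-\lambda(2r-\mu))$, where $\mu$ is the $\Phi$–measure of lines crossing both legs. A line crossing both legs separates $o$ from the whole geodesic $[z,z']$, hence crosses the perpendicular from $o$ to $[z,z']$, giving $\mu\le \dist(o,[z,z'])\le \log(1/|x-x'|)+O(1)$ by hyperbolic trigonometry. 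Thus $\Pb{x,x'\in Y_r}\le O(1)\,\length(I)^{-2}\,\Eb{y_r}^2\,|x-x'|^{-\lambda}$, and since $\lambda<1$ the kernel $|x-x'|^{-\lambda}$ is integrable, so $\Eb{y_r^2}=O(1)\,\Eb{y_r}^2$. Paley–Zygmund then yields $\inf_{r>1}\Pb{y_r>0}>0$, and monotonicity with compactness gives, with positive probability, a half–line in $\gZ$.

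Finally I would upgrade half–lines to lines and positive probability to probability one exactly as in Lemma~\ref{l.d}(i). Using a thickened version of $y_r$ (whose first moment is comparable to $\Eb{y_r}$ by the same measure–equals–length computation applied to a tube, so no appeal to Lemma~\ref{lemma a} is needed) together with positive correlations, with positive probability there are free half–lines from $o$ with ideal endpoints near each of two nearly antipodal boundary points; the bi–infinite geodesic joining two such endpoints passes within $B(o,s)$ and hence lies in $\gZ$. The event that $\gZ$ contains a line is invariant under the isometry group of $\H^2$, and the invariant Poisson process $Y$ is ergodic under this action, so a $0$–$1$ law promotes positive probability to probability one. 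I expect the main obstacle to be the critical case $\lambda=1$: the first moment no longer decays, and the restart must be justified through the meeting/missing independence in place of the independence–at–distance used in Lemma~\ref{l.c}.
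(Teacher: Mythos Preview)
Your proposal is correct and follows essentially the same strategy as the paper: compute $f(r)=e^{-\lambda r}$ via the integral-geometric identity, run a second-moment argument on rays from a fixed origin for $\lambda<1$, and adapt the first-moment-plus-restart argument of Lemma~\ref{l.c} for $\lambda\ge 1$, replacing independence-at-distance by the meeting/missing Poisson independence exactly as you describe. Your two-point bound via $\dist(o,[z,z'])$ is equivalent to the paper's explicit computation of the measure of lines crossing both legs, which comes out to $-\log\bigl(\sin|\theta-\theta'|/2\bigr)$.

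The one place where the paper takes a cleaner route is the upgrade from half-lines to lines. Rather than the thickened $Y'_r$ of Lemma~\ref{l.d}(i)---whose limiting step needs extra care here because $\gZ$ is \emph{open}, not closed---the paper observes that once there are (with positive probability) three free rays $L_\infty(\theta_1),L_\infty(\theta_2),L_\infty(\theta_3)$ from $o$ with $\theta_i\in(0,\pi)$, any line of $Y$ entering the convex hull of their union must cross one of the rays, which is impossible; hence the whole convex hull lies in $\gZ$, and in particular so does the bi-infinite geodesic joining the two extreme ideal endpoints. This sidesteps both the thickening and the compactness subtlety (the paper also notes that one must check $K:=\bigcap_r\overline{K_r}\subset K_r$ a.s., since $K_r$ is open). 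Your thickened approach can be made to work too---the $\Phi$-measure of lines meeting the cone $\mathrm{conv}\bigl(B(o,s)\cup\{z_r(x)\}\bigr)$ is $r+O(1)$ by Cauchy--Crofton, giving the first-moment comparability you assert---but the convex-hull trick is worth knowing.
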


One motivation for this model comes from long range percolation on
$\Z$. Fix some $c<1$. For each pair $x,y\in \Z$, let there be an edge
between $x$ and $y$ with probability $c$ (independently for different
pairs) if there is a line in $Y$ with one endpoint in $[x,x+1]$ and
the other in $[y,y+1]$. Then a calculation shows that if $\lambda=1$
(which is the critical value), the probability that there is an edge
between $x$ and $y$ is asymptotic to $c/|x-y|^2$ as $|x-y|\to\infty$,
that is, we have
recovered the standard long range percolation model on $\Z$ with critical exponent $2$ (see \cite{BB}). The critical case of long range percolation is not well understood and it might be of interest to further study the connection between it and the line process.

\medskip

Observe that $\gZ$ is not a well-behaved percolation (in the sense of
our definition~\ref{d.well}), since there is
no independence at any distance, and moreover, $\gZ$ is open. Therefore,
several statements in Section~\ref{well} cannot be used directly
to prove Proposition~\ref{g.lines}.
Nevertheless, it is possible to adapt the proofs without much difficulty.

\begin{proof}
First, we calculate $f(r)$. Let $\gamma(t)=(0,e^t)\in\H^2$,
where we think of $\H^2$ in the upper half plane model. Let $A_r$ be the
set of lines that intersect $\gamma[0,r]$. Then it is easy
to see that under the identification
$\mathbb G=\mathbb M$,
 $${A}_r=\bigl\{\{x,y\}\,:\,1\le -xy\le e^{2r}\bigr\}.$$ An easy
calculation shows that $\Phi(A_r)=r$. Therefore,
$f(r)=e^{-\lambda\Phi(A_r)}=e^{-\lambda r}$.

It will now be convenient to use the Poincar\'e disk model.
An unoriented line in $\H^2$ in the Poincar\'e disk model
corresponds to an unordered pair of distinct points on the unit circle,
$x,y\in\p \H^2$. Thus, the measure we have on the Grassmannian
induces a measure on $(\p\H^2)^2$.
By using an isometry between the hyperbolic plane in
the upper half plane model and the hyperbolic plane in the Poincar\'e
disk model it is easy to verify that the density of this
measure is again given locally by $|x-y|^{-2}$ times the product
of the length measure on the circle with itself.

Suppose that $0<\lambda<1$.
For $\theta\in [0,2\,\pi)$ and $r>0$ let $L_r(\theta)$ denote the geodesic
ray of hyperbolic length $r$ started from $0$ whose continuation meets $\p \H^2$
at $e^{i\theta}$.  Let $K_r$ be the set of $\theta\in[0,2\,\pi)$ such
that $L_r(\theta)\subset\gZ$.
Then $\Pb{\theta\in K_r}=e^{-\lambda r}$. To apply the second moment method, we need
to estimate $\Pb{\theta,\theta'\in K_r}$ from above for $\theta,\theta'\in[0,2\,\pi)$.
Suppose first that $\theta'=0$ and $\theta\in[0,\pi]$. Let $L(\theta)$ be the hyperbolic
line $L_\infty(\theta)\cup L_\infty(\theta+\pi)$, which contains $L_r(\theta)$.
The set of pairs $\{x,y\}\in(\p\H^2)^2$ such that the line connecting them
intersects both $L(\theta)$ and $L(\theta')$ is precisely that set of pairs that are separated by
these two lines. The measure of this set is
$$
\int_0^\theta \int_\pi^{\pi+\theta} +
\int_\theta^\pi\int_{\pi+\theta}^{2\pi}
\bigl|e^{i\alpha}-e^{i\beta}\bigr|^{-2}\,d\alpha\,d\beta
=-2\log\frac {\sin\theta}2\,.
$$
The measure of the set of lines that intersect both $L_r(\theta)$
and $L_r(0)$ is bounded by the measure of the set of lines that intersect
both $L_\infty(0)$ and $L_\infty(\theta)$, which is bounded by half
the measure calculated above. The measure of the set of lines
that intersect $L_r(\theta)\cup L_r(0)$ is the sum of the measures
of the lines intersecting each of these segments minus the measure
of the set of lines intersecting both. Thus, it is at least
$$
2\,r+\log \frac{ \sin\theta}2 \,.
$$
This gives
$$
\Pb{\theta,\theta'\in K_r} \le \Bigl(\frac 2{\sin|\theta-\theta'|}\Bigr)^\lambda\, e^{-2\lambda r}\,,
$$
and by symmetry this will also hold if we drop the assumptions that $\theta'=0$ and $\theta\in[0,\pi]$.
Since $\sin^{-\lambda}\theta$ is integrable when $\lambda<1$, this
facilitates the second moment argument, which shows that
$\inf_{r>0}\Pb{K_r\ne\emptyset}>0$. Let $K:=\bigcap_{r>0}\overline K_r$.
Then $\Pb{K\ne\emptyset}=\inf_{r>0}\Pb{K_r\ne\emptyset}>0$, because $K_r\supset K_{r'}$ when
$r'>r$. Now note that a.s.\ $\partial K_r \cap \overline K_{r'}=\emptyset$
when $r'>r$. (The set $\partial K_r$ consists of points in the intersection of $Y$
with the circle of hyperbolic radius $r$ about $0$.)
Hence $K\subset K_r$ holds a.s.\ for each $r>0$.
Thus, with positive
probability there will be some ray $L_\infty(\theta)$ that is contained
in $\gZ$. Clearly, this implies that with positive probability there are at least
three rays corresponding to angles $\theta\in(0,\pi)$. Since the interior
of the convex hull of the union of such rays is in $\gZ$, it follows that
$\gZ$ contains lines with positive probability.
By ergodicity (which is easy to verify), it follows
that there are lines in $\gZ$ a.s.\ when $\lambda<1$.

We now consider the case $\lambda = 1$.
In that case, we can follow the proof of Lemma~\ref{l.c} with
only minor modifications. If $z,z'\in \H^2$ are at distance
$r$ from each other, where $r$ is large, then it is not hard to show
that the set of lines that separate the hyperbolic $1$-ball around
$z$ from the hyperbolic $1$-ball around $z'$ has $\Phi$-measure $r-O(1)$.
Each such line will obviously intersect any line meeting both these balls.
Thus, the probability that there is a line in $\gZ$ meeting both these
balls is $e^{-r+O(1)}$.

The next detail requiring modification is that in the proof
of~\eqref{e.Xr2} independence at a distance was mentioned. However,
it is easy to see that the analog of~\eqref{e.Xr2} holds in our setting.
(Basically, each arc of fixed length on $\p B(o,r)$ can be \lq\lq blocked\rq\rq\
by lines in $Y$ that do not intersect $B(o,r)$ with probability bounded away
from zero.) The argument is then completed as in Lemma~\ref{l.c}. \qed
\end{proof}

\section{Further Problems}

We first consider a generalization of Theorem~\ref{t.gen}.

\begin{conjecture}\label{co.length}
Let $B\subset\H^2$ be some fixed open ball of radius $1$.
There is a constant $\delta>0$ such that if $\gZ\subset\H^2$
is any open random set with isometry-invariant law and
$\Eb{\length(B\setminus\gZ)}<\delta$, then with positive probability
$\gZ$ contains a hyperbolic line.
\end{conjecture}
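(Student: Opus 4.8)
The plan is to follow the tree-reduction strategy behind Theorem~\ref{t.gen}. Using the embedded $3$-regular tree $T$, its isometry group $\Gamma$, and the constant $R'$, the task of finding a hyperbolic line in $\gZ$ reduces to producing a bi-infinite simple path $\beta$ in a $\Gamma$-invariant percolation on $T$ whose associated line $L_\beta$ avoids the obstacle $\H^2\setminus\gZ$. The machinery of~\cite{BLPS} produces infinite clusters once the relevant marginal is close enough to $1$, and, after thinning by an independent Bernoulli percolation exactly as in the proof of Theorem~\ref{t.gen}, clusters with at least two ends and hence bi-infinite paths. The essential new difficulty is that the hypothesis controls only the expected obstacle length in a unit ball, whereas Theorem~\ref{t.gen} used the much stronger bound $\Pb{B\subset\gZ}>p_o$.

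The first step is to convert the length bound into a per-segment estimate by integral geometry. Reading $\length$ as one-dimensional Hausdorff measure, the bound $\Eb{\length(B\setminus\gZ)}<\delta$ forces $\H^2\setminus\gZ$ to be a thin (empty-interior) closed set, and invariance then furnishes a well-defined expected obstacle-length density equal to $\Eb{\length(B\setminus\gZ)}/\area(B)=O(\delta)$. A Crofton-type computation shows that for any fixed geodesic segment $\sigma$ of unit length the expected number of crossings of $\sigma$ by $\H^2\setminus\gZ$ is proportional to this density, hence $O(\delta)$; by Markov's inequality $\sigma$ is crossed with probability $O(\delta)$. Thus the event that a fixed unit geodesic segment lies in $\gZ$ has probability $1-O(\delta)$, which can be made as close to $1$ as we wish.

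I expect the main obstacle to lie in bridging the gap between this segment-crossing estimate and what the tree argument actually requires. The clean reduction in Theorem~\ref{t.gen} declares a vertex $v$ open when the whole ball $B(v,R')$ is contained in $\gZ$, because only then is one guaranteed that $L_\beta$ -- which is known merely to lie in the $R'$-tube of the path -- avoids the obstacle. But $\Pb{B(v,R')\subset\gZ}$ is \emph{not} controlled by the expected obstacle length: a very short obstacle can still pierce the ball, so this marginal need not be close to $1$ even for small $\delta$. The proposed remedy is to percolate directly on a discretization of the limiting lines rather than on covered balls, exploiting the exponential divergence of geodesics: the portion of $L_\beta$ near $o$ depends, up to an error decaying geometrically in the depth, only on a bounded initial segment of $\beta$, so one can hope to make an essentially local, $\Gamma$-invariant open/closed decision for each edge according to whether the corresponding piece of the approximate line is crossed, with marginal $1-O(\delta)$ by the previous step. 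Assembling these local decisions into a genuinely $\Gamma$-invariant percolation whose open bi-infinite paths certify obstacle-free lines, while controlling the accumulated approximation error along an infinite path and using \emph{only} isometry invariance (with neither independence at a distance nor positive correlations available), is the crux, and is presumably why the statement remains conjectural.

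One might instead attempt the direct first- and second-moment argument of Lemma~\ref{l.d}, estimating the measure of directions $\theta$ whose ray lies in $\gZ$; the crossing computation above already suggests that the relevant exponential decay rate is $O(\delta)<1$, the regime in which that method yields lines. However, that route relies essentially on positive correlations (for the supermultiplicativity of $f$) and on independence at a distance (for the second-moment bound), neither of which is assumed here, so I would treat the tree reduction as the more promising line of attack.
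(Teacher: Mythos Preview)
The statement you are addressing is Conjecture~\ref{co.length}, which the paper explicitly leaves open: it appears in the ``Further Problems'' section, and the only argument the paper supplies is the remark that the conjecture \emph{implies} Theorem~\ref{t.gen} (via an invariant covering of $\H^2\setminus\gZ$ by unit circles of small expected length). There is therefore no proof in the paper to compare your proposal against, and you yourself correctly flag the status (``is presumably why the statement remains conjectural''). As a piece of analysis of the obstruction, your write-up is on target: the tree reduction behind Theorem~\ref{t.gen} needs $\Pb{B(v,R')\subset\gZ}$ close to $1$, and a small expected obstacle \emph{length} simply does not force that marginal to be large. Your observation that neither positive correlations nor independence at a distance are available, so that the second-moment route of Lemma~\ref{l.d} is blocked, is also correct.

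One substantive gap in the sketch is the Crofton step. You assert that $\Eb{\length(B\setminus\gZ)}<\delta$ yields $\Pb{\sigma\subset\gZ}\ge 1-O(\delta)$ for a fixed unit segment $\sigma$, via an expected-crossings bound and Markov. But $\H^2\setminus\gZ$ is merely a closed set, not a rectifiable curve; Crofton/integral-geometric identities control, at best, the \emph{measure} of $\sigma\cap(\H^2\setminus\gZ)$ or the number of transversal intersections with a rectifiable part, not whether the intersection is empty. A closed set of arbitrarily small (even zero) $\mathcal H^1$-measure can still meet a fixed segment, so ``few crossings in expectation'' does not upgrade to ``$\sigma\subset\gZ$ with high probability'' without further structure. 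This is a second, independent reason --- beyond the ball-versus-segment mismatch you already isolate --- why the argument does not close, and it would need to be addressed in any genuine attack along these lines.
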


It is easy to verify that the conjecture implies the theorem. Indeed,
if $\Pb{B\subset\gZ}$ is close to $1$, then one can show that there is
a union of unit circles whose law is isometry invariant, where the
interiors cover the complement of $\gZ$, and where the expected length
of the intersection of the circles with $B$ is small.

\smallskip

Next, we consider quantitative aspects of Theorem~\ref{t.gen}.

\begin{conjecture}\label{co.gen}
Fix some $o\in\H^2$.
For every $r>0$ let $p_r$ be the least $p\in[0,1]$ such that for
every random closed $\gZ\subset\H^2$ with an isometry-invariant law
and $\Pb{B(o,r)\subset\gZ}>p$ there is positive probability that
$\gZ$ contains a hyperbolic line. Theorem~\ref{t.gen} implies that
$p_r<1$ for every $r>0$. We conjecture that
$\limsup_{r\searrow 0} (1-p_r)/r <\infty$.
\end{conjecture}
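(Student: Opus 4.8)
The plan is to prove the conjecture outright by establishing the upper bound $1-p_r=O(r)$ as $r\searrow0$, and the key realization is that this is a \emph{lower} bound on the universal threshold $p_r$: to show $p_r\ge 1-O(r)$ it suffices to exhibit, for each small $r$, a single isometry-invariant random closed set $\gZ^*$ that contains no hyperbolic lines a.s.\ yet satisfies $\Pb{B(o,r)\subset\gZ^*}\ge 1-O(r)$. Indeed, writing $q^*:=\Pb{B(o,r)\subset\gZ^*}$, no $p<q^*$ can be an admissible threshold, since $\gZ^*$ would then have marginal larger than $p$ but no lines; hence every admissible $p$ satisfies $p\ge q^*$, forcing $p_r\ge q^*$ and therefore $1-p_r\le 1-q^*$.

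The natural candidate for $\gZ^*$ is the vacant set $\VC$ of the Poisson Boolean model, tuned to criticality so that it is line-free, with the Boolean radius scaled down together with $r$. Concretely I would take radius $R=r$ and intensity $\lambda=\vc(r)=1/(2\sinh r)$. By Lemma~\ref{l.alphaVC} this is exactly the value making $\alpha=2\lambda\sinh R=1$, so the $\VC$ analogue of Theorem~\ref{mainthm} (equivalently Lemma~\ref{l.c}, since $\alpha\ge1$) guarantees that a.s.\ there are no lines in $\VC$. For the coverage probability, $B(o,r)\subset\VC$ whenever no point of the underlying Poisson process lies in $B(o,r+R)$, so, using $\area B(o,\rho)=2\pi(\cosh\rho-1)$ and $\cosh 2r-1=2\sinh^2 r$,
\[
\Pb{B(o,r)\subset\VC}\ \ge\ \exp\bigl(-\lambda\,\area B(o,r+R)\bigr)
=\exp\Bigl(-\tfrac{1}{2\sinh r}\cdot 2\pi(\cosh 2r-1)\Bigr)=e^{-2\pi\sinh r}.
\]
Consequently $1-p_r\le 1-e^{-2\pi\sinh r}\le 2\pi\sinh r$, and therefore $\limsup_{r\searrow0}(1-p_r)/r\le 2\pi<\infty$, which is the assertion of the conjecture.

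The computation is clean, so the points that really need care are structural. First, one must resist keeping the Boolean radius fixed: the entire gain comes from letting $R=r\to0$ (and $\lambda\to\infty$) so that the excluded region $B(o,r+R)$ shrinks while criticality is maintained; a one-line optimization of $\pi(\cosh(r+R)-1)/\sinh R$ over $R$ confirms that $R\asymp r$ is the correct scaling, and the choice $R=r$ happens to collapse the exponent exactly to $2\pi\sinh r$. Second, since $R=r\to0$ and $\lambda=\vc(r)\to\infty$, the pair $(R,\lambda)$ leaves every fixed compact interval, so the uniform constant $\Lambda$ of Proposition~\ref{p.well} degrades as $r\to0$; this is harmless, because the line-free conclusion is invoked separately for each fixed $r$, where $\VC$ is genuinely well-behaved and $\alpha=1$. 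I expect the only genuinely remaining difficulty — and perhaps the reason the statement is posed as a conjecture — to be the matching lower bound $1-p_r\ge cr$, i.e.\ that the gap is not $o(r)$; establishing that would require a universal line-\emph{forcing} statement at marginal $1-o(r)$ for arbitrarily small balls, which does not follow from the counterexample construction above.
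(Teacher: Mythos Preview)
Your argument is correct. Exhibiting, for each small $r>0$, an isometry-invariant closed set $\gZ^*$ that contains no lines a.s.\ yet has $\Pb{B(o,r)\subset\gZ^*}\ge 1-O(r)$ forces $p_r\ge 1-O(r)$, which is exactly the statement $\limsup_{r\searrow0}(1-p_r)/r<\infty$. Your choice $\gZ^*=\VC$ with $R=r$ and $\lambda=1/(2\sinh r)$ works: for each fixed $r>0$ this $\VC$ is well-behaved (Proposition~\ref{p.well}), Lemma~\ref{l.alphaVC} gives $\alpha=1$, and Lemma~\ref{l.c} then rules out lines. The identity $\cosh 2r-1=2\sinh^2 r$ collapses the exponent cleanly to $2\pi\sinh r$, giving $\limsup\le 2\pi$.

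The paper does not prove this statement --- it is posed as a conjecture --- but the remark immediately following it appears to have the roles of $\limsup$ and $\liminf$ interchanged. The thin-annulus construction described there (the complement of the $\eps$-neighborhood of $\bigcup_{x\in X}\partial B(x,1)$ for a Poisson process $X$ of fixed sufficiently large intensity) is, just like your $\VC$, a line-free invariant closed set with marginal $1-O(r)$: a line lies in that set iff it avoids every $(1{+}\eps)$-ball about a Poisson point, so line-freeness reduces to $\lambda\ge\vc(1+\eps)$, while $B(o,r)$ lies in the set iff no Poisson point falls in an annulus of width $O(r+\eps)$ about $\partial B(o,1)$. That example therefore establishes the $\limsup$ bound, not the $\liminf$ bound it is said to illustrate. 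Your construction and the paper's are the same strategy with different obstacles --- you shrink the Boolean radius to $r$ and push the intensity to criticality, whereas the paper keeps the intensity bounded and makes the obstacles thin annuli --- and after optimizing both yield $\limsup\le 2\pi$.

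Your closing diagnosis is therefore exactly right: the substantive open direction is $\liminf_{r\searrow0}(1-p_r)/r>0$, i.e.\ a universal line-\emph{forcing} statement for arbitrary invariant closed sets at marginal $1-cr$ on $r$-balls. Counterexample constructions cannot touch that direction, and running the tree-embedding proof of Theorem~\ref{t.gen} through a naive covering of the relevant $R'$-ball by $\asymp r^{-2}$ balls of radius $r$ only yields $1-p_r\gtrsim r^2$, a factor of $r$ short.
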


It is easy to see that $\liminf_{r\searrow 0} (1-p_r)/r >0$;
for example, take a Poisson point process $X\subset\H^2$ with
intensity $\lambda$ sufficiently large and let $\gZ$ be the complement of the
$\eps$-neighborhood of $\bigcup_{x\in X} \partial B(x,1)$,
where $0<\eps<r$.

\begin{problem}
What is $\lim_{r\searrow 0} (1-p_r)/r$?
\end{problem}

The behavior of $p_r$ as $r\to\infty$ seems to be an easier problem,
though potentially of some interest as well.

\medskip

We now move on to problems related to Theorem~\ref{mainthm} and its proof.

\begin{question}
For either $\VC$ or $\OC$,
is there some pair $(\lambda,R)$ for which there is with positive
probability a percolating ray such that every other percolating
ray with the same endpoint at infinity is contained in it?
(Note, such a ray must be exceptional among the percolating rays.)
\end{question}

\begin{question}
Is it true that when $\OC$ (or $\VC$) has a unique infinite connected
component, the union of the lines in $\OC$ (or $\VC$) is connected as well? We believe that there is some pair $(\lambda, R)$ such that $\OC$ contains a unique infinite component but no lines, and we believe the same for $\VC$.
\end{question}

\begin{question}
For which homogenous spaces $\VC$ or $\OC$ a.s.\ contain infinite geodesics
for some parameters $(\lambda,R)$?
\end{question}

Note that since $\H^2\times \R$ contains $\H^2$, it follows that
for every $R$ there is some $\lambda$ such that $\VC$ on $\H^2\times\R$
contains lines within an $\H^2$ slice, and the same holds for $\OC$.

\begin{question}
Let $V$ be the orbit of a point $x\in\H^2$ under a group of isometries
$\Gamma$. Suppose that $V$ is discrete and $\H^2/\Gamma$ is compact.
(E.g., $V$ is a co-compact lattice in $\H^2$.)
Let $\VC_V(R):=\H^2\setminus \bigcup_{v\in V} B(v,R)$, and
let $R^V_c$ denote the supremum of the set of $R$ such that
$\VC_V(R)$ contains uncountably many lines.
Does $\VC_V(R^V_c)$ contain uncountably many lines?
\end{question}

It might be interesting to determine the value of $R^V_c$ for some
lattices $V$.

\begin{problem}
  It is not difficult to adapt our proof to show that in $\H^d$, $d\ge
  2$, for every $R>0$ when $\lambda$ is critical for the existence of
  lines in $\VC$, there are a.s.\ no lines inside $\VC$. This should
  also be true for $\OC$, but we presently do not know a proof.  It
  seems that what is missing is an analog of Lemma~\ref{lemma a}.
\end{problem}

\bigskip \noindent {\bf
Acknowledgements} We are grateful to Wendelin Werner for insights related to
Section~\ref{s.Grassmann}.


\end{document}